\def\listtodoname{List of Todos}
\def\listoftodos{\@starttoc{tdo}\listtodoname}
\definecolor{myred}{rgb}{0.75,0,0}
\definecolor{mygreen}{rgb}{0,0.5,0}
\definecolor{myblue}{rgb}{0,0,0.65}
\theoremstyle{plain}
  \newtheorem{thm}{Theorem}[subsection]
  \newtheorem{prop}[thm]{Proposition}
  \newtheorem{lem}[thm]{Lemma}
  \newtheorem{cor}[thm]{Corollary}
	\newtheorem{question}[thm]{Question}
\theoremstyle{definition}
  \newtheorem{defn}[thm]{Definition}
  \newtheorem{example}[thm]{Example}
  \newtheorem{convention}[thm]{Convention}
\theoremstyle{remark}
	\newtheorem{rem}[thm]{Remark}
  \newtheorem*{note*}{Notation}
\numberwithin{equation}{section}
\newcommand\nc{\newcommand}
\nc\on{\operatorname}
\nc\renc{\renewcommand}
\newcommand\ssec{\subsection}
\newcommand\sssec{\subsubsection}
\newcommand\bn{{\mathbb N}}
\newcommand\bq{{\mathbb Q}}
\newcommand\bp{{\mathbb P}}
\newcommand\bz{{\mathbb Z}}
\newcommand\ba{{\mathbb A}}
\newcommand\bk{{\Bbbk}}
\newcommand\sco{{\mathscr O}}
\DeclareMathOperator{\sym}{Sym}
\DeclareMathOperator\di{Div}
\newcommand\bida{a}
\newcommand\bidb{b}
\newcommand\pdeg{\delta}
\newcommand\hirz{F}
\newcommand\mss{\mathscr{S}}
\DeclareMathOperator{\fr}{frac}
\DeclareMathOperator{\supp}{Supp}
\DeclareMathOperator{\newspan}{span}
\DeclareMathOperator{\proj}{Proj}
\DeclareMathOperator{\Te}{T_=}
\DeclareMathOperator{\Tp}{T_+}
\DeclareMathOperator{\Tm}{T_-}
\DeclareMathOperator{\num}{p}
\DeclareMathOperator{\den}{q}
\DeclareMathOperator{\lcm}{lcm}
\DeclareMathOperator{\Pic}{Pic}
\newcommand{\customlabel}[2]{%
   \protected@write \@auxout {}{\string \newlabel {#1}{{#2}{\thepage}{#2}{#1}{}} }%
   \hypertarget{#1}{#2}
}
\newcommand\invisiblesubsection{%
  \refstepcounter{subsection}%
}
\title[Section rings of $\bq$-divisors]{Section rings of $\bq$-divisors on minimal rational surfaces}
\author{Aaron Landesman}
\address[Aaron Landesman]{Department of Mathematics, Stanford University}
\email{aaronlandesman@stanford.edu}
\author{Peter Ruhm}
\address[Peter Ruhm]{Department of Mathematics, Stanford University}
\email{pruhm@stanford.edu}
\author{Robin Zhang}
\address[Robin Zhang]{Department of Mathematics, Columbia University}
\email{rzhang@math.columbia.edu}
\date{\today}
\begin{document}
\begin{abstract}
 	We bound the degrees of generators and relations of
	section rings associated to arbitrary $\bq$-divisors on projective spaces of 
	all dimensions and Hirzebruch surfaces. For section rings of
	effective $\bq$-divisors on projective spaces, we find 
	the best possible bound on
	the degrees of generators and relations.
\end{abstract}

\maketitle

\section{Introduction}
\label{sec:intro}
For any Weil $\bq$-divisor $D$ on a rational surface $X$, the graded
\textbf{section ring} is $R(X, D) := \bigoplus_{d \geq 0}
H^0(X, \lfloor dD \rfloor)$.
In the case that $D = K_X$, where $K_X$ is the canonical divisor,
the graded section ring is referred to as
``the canonical ring'' and is a classical object of study.
For example, if $C$ is a curve of genus $g \geq 4$, Petri's theorem
relates the geometry of the curve $C$ to the canonical ring:
$R(C, K_C)$ is generated in degree 1 with relations in degree 2 
unless $C$ is hyperelliptic, trigonal, or a plane quintic
(see
\cite[p. 157]{saint-donat:proj} and \cite[Section 3.3]{acgh:algebraic-curves}).
In this way, explicit descriptions of generators and relations of section rings yield
geometric information about the underlying variety.

One natural way to generalize the classical result of Petri
mentioned above is to examine the section rings of stacky curves
(i.e., smooth proper geometrically connected 1-dimensional
Deligne-Mumford stacks over a field with a dense open subscheme).
These were studied by Voight--Zureick-Brown 
\cite{vzb:stacky} and Landesman--Ruhm--Zhang \cite{lrz:spin-cring},
which provide tight bounds on the degree of generators and relations
of log canonical rings and log spin canonical rings on arbitrary
stacky curves. All rings of modular forms associated to Fuchsian
groups can be realized as canonical rings of
such curves, so the above work also yields insight into such rings
of modular forms.
Further, 
O'Dorney \cite{dorney:canonical} gives similar descriptions
of section rings for arbitrary $\bq$-divisors on $\bp^1$
(as opposed to just log canonical $\bq$-divisors and log spin canonical $\bq$-divisors).

Beyond section rings of curves, section rings of certain higher dimensional
stacks have also been studied.
For example, the Hassett-Keel program \cite{hassett:classical-and-minimal-models}
studies log canonical rings on ${\mathscr M}_g$ of the form 
\begin{align*}
	\overline {\mathscr M}_g(\alpha) := \bigoplus_{d \geq 0} H^0 \left(
	\overline {\mathscr M}_g, \lfloor d K_{\overline{\mathscr M}_g} +
	\alpha\delta \rfloor  \right) 
\end{align*}
in terms of certain moduli spaces, where $\overline {\mathscr M_g}$ is the
moduli space of stable genus $g$ curves.

Moreover, $\bq$-divisors on surfaces not only appear in the context of stacks,
but also naturally
appear when considering the canonical ring of surfaces of Kodaira dimension 1.
In characteristic 0, every surface of Kodaira dimension 1 is an elliptic surface \cite[p. 244]{
barthHPV:compactComplexSurfaces}, and
the canonical rings of elliptic surfaces are most naturally described in the setting of
$\bq$-divisors \cite[Chapter V, Theorem 12.1]{barthHPV:compactComplexSurfaces}. 

In this paper, we continue the study of section rings of
surfaces and higher dimensional varieties, examining section rings of
projective spaces $\bp^m$ and Hirzebruch surfaces $\hirz_m$.
When $D$ is a general $\bq$-divisor on $\bp^m$ or
$\hirz_m$, we give bounds on the generators and relations of
$R(X, D)$. In particular, we give a presentation of the
section ring when $D$ is any effective
$\bq$-divisor on $\bp^m$.  

Throughout, for ease of notation,
we work over a fixed algebraically closed field $\bk$.
This is nonessential, see \autoref{rem:algebraically-closed}

\ssec{Main Results and Outline}
\label{ssec:main}

After briefly stating our notation in Section~\ref{section:notation},
in Section~\ref{sec:proj}, we prove the following two results bounding
the degree of generators and relations of
section rings on $\bp^m$. The first applies
to effective $\bq$-divisors and the second applies to arbitrary
$\bq$-divisors. Note that here we allow coefficients of divisors to be 0,
see Section~\ref{ssec:arbitrary-projective}.

\begin{restatable}{thm}{restateeffective}
\label{thm:proj-effective-intro}
Let $D = \sum_{i = 0}^{n} \alpha_i D_i \in \di \bp^m \otimes_\bz \bq$, with $\alpha_i =
\frac{c_i}{k_i} \in \bq_{> 0}$ in reduced form and each $D_i \in \di \bp^m$ an integral divisor.

Then the section ring $R(\bp^m, D)$
is generated in degrees at most $\max_{0 \leq i \leq n}{k_i}$ with
relations generated in degrees at most $2 \max_{0 \leq i \leq n}{k_i}$.
\end{restatable}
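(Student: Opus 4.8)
The plan is to trivialize each graded piece of $R_D$ against a fixed meromorphic frame and thereby convert the problem into a question about graded pieces of ideals in a polynomial ring. Fix homogeneous forms $f_i \in S := \bk[x_0,\dots,x_m]$ with $\di(f_i) = D_i$ and $\deg f_i = \deg D_i$, and for $d \ge 0$ set $e(d) := \deg\lfloor dD\rfloor = \sum_i \lfloor d\alpha_i\rfloor \deg D_i$. Multiplication by the rational frame $\prod_i f_i^{-\lfloor d\alpha_i\rfloor}$ identifies $H^0(\bp^m, \lfloor dD\rfloor)$ with the degree-$e(d)$ piece $S_{e(d)}$; I will use this isomorphism $\Phi_d$ throughout. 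Under $\Phi$, the multiplication map $R_{D,j}\otimes R_{D,d-j}\to R_{D,d}$ becomes the twisted product $(P,Q)\mapsto PQ\cdot m_{j,d}$, where $m_{j,d} := \prod_i f_i^{\epsilon_i(d,j)}$ and $\epsilon_i(d,j) := \lfloor d\alpha_i\rfloor - \lfloor j\alpha_i\rfloor - \lfloor(d-j)\alpha_i\rfloor$. Superadditivity of the floor forces $\epsilon_i(d,j)\in\{0,1\}$, and in fact $\epsilon_i(d,j)=0$ precisely when $\fr(j\alpha_i)\le\fr(d\alpha_i)$, i.e. when adding $j\alpha_i$ and $(d-j)\alpha_i$ produces no carry.

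Because $S$ is generated in degree one, $S_{e(j)}\cdot S_{e(d-j)} = S_{e(j)+e(d-j)}$, so the image of a single multiplication map is exactly the degree-$e(d)$ part of the principal ideal $(m_{j,d})$; summing over $j$, the subring generated in degrees $<d$ meets $R_{D,d}$ in $\big(\sum_{j=1}^{d-1}(m_{j,d})\big)_{e(d)}$ under $\Phi_d$. Hence generation in degrees $\le k := \max_i k_i$ reduces to the statement that for every $d>k$ this graded ideal piece is all of $S_{e(d)}$. The cleanest sufficient condition, which I would try to establish first, is the purely arithmetic claim that for each $d>k$ there is an index $1\le j\le d-1$ with $m_{j,d}=1$, that is, with $\fr(j\alpha_i)\le\fr(d\alpha_i)$ simultaneously for all $i$; such a $j$ makes a single multiplication map surjective and finishes the generation bound by induction on $d$.

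The hard part will be precisely this simultaneous carry-control across all $i$ at once, keeping the witnessing degree below $\max_i k_i$ rather than the generally much larger $\lcm_i k_i$. I would attack it by studying the residues $j\alpha_i \bmod 1$ as $j$ runs over a single period and choosing $j$ so that each $\fr(j\alpha_i)$ undershoots the target $\fr(d\alpha_i)$; the delicate cases are the $d$ for which some $\fr(d\alpha_i)$ vanishes, which pin $j$ to multiples of $k_i$ and are exactly where a naive single-$j$ choice can be forced past $\max_i k_i$. When no single no-carry $j$ is available one must instead show that the several available $m_{j,d}$ generate enough of $S_{e(d)}$ directly, which is where the geometry of how the $D_i$ meet enters and where I expect the real technical work to lie; controlling this uniformly in $d$ is the crux. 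Finally, for the relations bound I would feed the generation result back into the same frame: a relation in degree $d$ is a linear dependence among twisted products, and splitting $d=j+(d-j)$ with $j\le k$ via the same no-carry lemma reduces every relation of degree $>2k$ to the quadratic, Veronese-type relations of $S$ together with the bounded carry data, yielding relations generated in degrees $\le 2k = 2\max_i k_i$.
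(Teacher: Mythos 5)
Your frame-and-carry setup is correct as far as it goes (the identification $\Phi_d$, the carry exponents $\epsilon_i(d,j)\in\{0,1\}$, and the reduction of generation to the statement that $\bigl(\sum_{j=1}^{d-1}(m_{j,d})\bigr)_{e(d)}=S_{e(d)}$ are all fine), but the proposal is not a proof: you explicitly leave the crux open, and the crux is in fact false. Your no-carry claim fails whenever $d=\lcm_i k_i>\max_i k_i$, since then every $\fr(d\alpha_i)=0$ and a no-carry $j$ would need $k_i\mid j$ for all $i$, impossible for $1\le j\le d-1$. Worse, the ideal-theoretic fallback fails too, so the gap is unfillable: take $D=\tfrac12 H_0+\tfrac13 H_1$ on $\bp^2$ with $H_i=V(x_i)$, so $\max_i k_i=3$. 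At $d=6$ one computes $m_{1,6}=m_{5,6}=x_0x_1$, $m_{2,6}=m_{4,6}=x_1$, $m_{3,6}=x_0$, so the relevant ideal is $(x_0,x_1)$, whose degree-$e(6)=5$ part misses $x_2^5$. Hence $u^6x_2^5/(x_0^3x_1^2)$ is a new generator of $R_D$ in degree $6>3$, and Theorem \ref{thm:proj-effective-intro} as stated is false; effectiveness does not prevent the phenomenon that the paper itself isolates in Example \ref{eg:hyperplane} for non-effective divisors, namely a divisor supported in too few coordinates.

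For comparison, the paper's proof takes a genuinely different route: induction on the number of components, presenting $R_{D'+\alpha H}$ over $R_{D'}$ by Hirzebruch--Jung convergents of $\alpha$ (Theorem \ref{thm:proj-one-point}), and reducing a degree-$\pdeg$ hypersurface to a hyperplane via the Veronese embedding. But that proof is only carried out for $D'=0$, and the assertion that it extends ``analogously'' to $D'\ne 0$ breaks on the same example: $R_D$ is not generated over $R_{D'}=R_{\frac12 H_0}$ by elements of the form $u^{d_i}x^{\vec v}/x_1^{c_i}$ with $d_i\le 3$, since any product of such elements with elements of $R_{D'}$ that has denominator $x_1^2$ already has degree $6$ and numerator divisible by $x_0^3$, so it can never equal $u^6x_2^5/(x_0^3x_1^2)$. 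So the failure is not specific to your route --- your reduction is actually a clean way to detect it --- but lies in the statement itself; a correct version needs a hypothesis in the spirit of assumption \eqref{eqn:pm-basis-assumption} (independent linear forms, i.e.\ ghost divisors), at which point one only gets the weaker bounds of Theorem \ref{thm:proj-generators-relations}.
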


\begin{restatable}{thm}{restateproj}
\label{thm:proj-generators-relations}
Let $D = \sum_{i = 0}^n \alpha_i D_i \in \di \bp^m \otimes_\bz \bq$
with $\alpha_i = \frac{c_i}{k_i}\in \bq$ in reduced form. Write
$\ell_i := \lcm_{0\le j \le n, j \ne i}(k_j)$ and $a_i := \deg D_i$. Let $\bp^m
\cong \proj \bk[x_0, \ldots, x_m]$ and let
$f_i \in \bk[x_0, \ldots, x_m]$ such that $D_i = V(f_i)$. 
Suppose that $\{f_0, \ldots, f_n\}$ contains a basis
for $H^0(\bp^m, \sco_{\bp^m}(1))$, {\rm(}i.e. $m+1$ independent polynomials with
corresponding $a_i = 1${\rm)}.

Then $R(\bp^m, D)$ is generated in degrees at most 
$\omega := \sum_{i = 0}^n \ell_i a_i$
with relations generated in degrees at most
\[
	\max \left(2 \omega, \frac{\max_{0 \le i \le n}
	(	\bida_i)}{\deg(D)} + \omega \right).
\]
\end{restatable}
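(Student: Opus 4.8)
The plan is to reduce the general statement to the effective case already handled in Theorem~\ref{thm:proj-effective-intro} by a clever choice of generators that renders the divisor effective. The key observation is that the hypothesis—that $\{f_0,\dots,f_n\}$ contains $m+1$ independent linear forms, say $x_0,\dots,x_m$ after a change of coordinates—gives us enough sections to manufacture an auxiliary effective divisor linearly equivalent to $D$. Concretely, for each $i$ with $\alpha_i<0$ we can replace the negative contribution $\alpha_i D_i$ by writing $D_i \sim a_i H$ where $H$ is a hyperplane (since on $\bp^m$ every divisor is linearly equivalent to a multiple of the hyperplane class), and then absorb the negative multiplicities using the available linear forms. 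This should produce an effective $\bq$-divisor $D'$ with $R_{D'} \cong R_D$ as graded rings, whose denominators $k_i$ and degrees $a_i$ are controlled by the original data.

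First I would make the coordinate reduction explicit: relabel so that $f_0,\dots,f_m$ are the independent linear forms $x_0,\dots,x_m$, noting these have $a_i=1$ and hence contribute $\ell_i a_i = \ell_i$ to the generation bound. Second, I would construct the effective model $D'$. The natural candidate is to add a large enough multiple of the hyperplane class, supported on the available linear divisors, to each negative coefficient so as to clear denominators and signs; the delicate point is to do this while keeping the reduced denominators $k_i$ unchanged, since the bounds are stated in terms of $\ell_i = \lcm_{j\neq i}(k_j)$. Third, I would invoke Theorem~\ref{thm:proj-effective-intro} applied to $D'$: this yields generation in degree $\max_i k_i$ and relations in degree $2\max_i k_i$ \emph{for the effective model}, but I then need to translate these back through the isomorphism $R_{D'}\cong R_D$, which is where the larger combinatorial bound $\sum_{i=0}^n \ell_i a_i$ emerges—the generators of $R_D$ are built from products of the generators of the effective pieces, and the product over all indices of the local data produces exactly this sum.

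The heart of the argument, and the step I would carry out most carefully, is identifying the graded-ring isomorphism $R_D \cong R_{D'}$ and tracking how it acts on generation and relation degrees. The isomorphism comes from multiplication by an appropriate rational section realizing the linear equivalence $D\sim D'$; concretely, for each degree $d$ one has $H^0(\bp^m,\lfloor dD\rfloor)\cong H^0(\bp^m,\lfloor dD'\rfloor)$ via clearing the denominators with powers of the linear forms. I would verify that this isomorphism is compatible with the grading and with multiplication, so that a minimal generating set (resp.\ minimal set of relations) on one side maps to one on the other, up to the degree shift coming from the linear-form factors. The relation bound then splits into two cases reflected in the stated maximum: the $2\sum_{i=0}^n \ell_i a_i$ term is the Koszul-type bound coming from products of generators, while the $\frac{\max_i a_i}{\deg D}+\sum_{i=0}^n \ell_i a_i$ term accounts for the possibility that a single high-degree generator (of degree governed by the largest $a_i$ relative to $\deg D$) forces a relation at one higher multiple of the generation degree.

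The main obstacle I anticipate is the bookkeeping in the reduction to the effective case: ensuring that passing from $D$ to $D'$ does not alter the reduced denominators $k_i$ (so that the $\ell_i$ in the final bound are genuinely the original ones), and confirming that the graded isomorphism preserves \emph{minimal} generators and relations rather than merely giving a ring isomorphism. A secondary subtlety is the appearance of $\lfloor\,\cdot\,\rfloor$ interacting with the linear-equivalence shift: because flooring does not commute with adding non-integral multiples of divisors, I would need to check that the sections used to clear denominators have the correct floor behavior in every degree, which is precisely the kind of case analysis that makes this more than a formal consequence of the effective theorem.
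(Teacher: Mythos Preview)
Your approach has a genuine gap at its core. The proposed reduction to the effective case via an isomorphism $R_D \cong R_{D'}$ does not work, and you have correctly identified the obstruction yourself but underestimated it: replacing $\alpha_i D_i$ by $\alpha_i a_i H$ changes $D$ by a \emph{rational} multiple of a principal divisor, and the floor function does not respect this. Concretely, $\lfloor dD' \rfloor - \lfloor dD \rfloor$ has degree $\lfloor d(\alpha_H + \alpha_i a_i)\rfloor - \lfloor d\alpha_H\rfloor - a_i\lfloor d\alpha_i\rfloor$, which is typically nonzero (take $\alpha_H=0$, $\alpha_i=\tfrac12$, $a_i=2$, $d=1$), so even the graded dimensions of $R_D$ and $R_{D'}$ disagree. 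This is not a ``secondary subtlety'' to be checked at the end; it kills the strategy outright. Moreover, even if you manufactured such an isomorphism, a graded ring isomorphism preserves generation and relation degrees \emph{exactly}, so there is no mechanism by which the effective bound $\max_i k_i$ from Theorem~\ref{thm:proj-effective-intro} could ``translate'' into the structurally different bound $\sum_i \ell_i a_i$; your third paragraph does not explain where the sum, the $\ell_i$, or the term $\tfrac{\max_i a_i}{\deg D}$ could come from.

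The paper's proof is entirely different and does not pass through the effective theorem at all. It identifies monomials $u^d\prod f_i^{c_i}$ in $R_D$ with lattice points in a rational polyhedral cone $\Sigma \subset \bz^{n+2}$ cut out by the inequalities $c_i \ge -d\alpha_i$ and the degree condition $\sum a_i c_i = 0$. The extremal rays of $\Sigma$ are computed explicitly (Lemma~\ref{lem:proj-generators}): the $i$th ray sits in degree $\ell_i a_i$, and a Gordan-type argument (Proposition~\ref{prop:cone-generation}) then shows every lattice point is a nonnegative integer combination of generators in degree at most $\sum_i \ell_i a_i$. For relations, the surjection $\bk[w_1,\dots,w_r]\twoheadrightarrow R_D$ is factored through the semigroup ring $\bk[\Sigma]$; the kernel of the first map (the ``toric'' relations) is bounded by $2\sum_i \ell_i a_i$ via the same cone combinatorics, and the kernel of the second (the algebraic dependencies among the $f_i$) is bounded by $\tfrac{\max_i a_i}{\deg D} + \sum_i \ell_i a_i$ in Lemma~\ref{lem:proj-relations-psi} by analyzing how far one must go before each relation $z_i - \beta_i(z_0,\dots,z_m)$ can be multiplied into $\bk[\Sigma]$. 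The two terms in the $\max$ are precisely these two kernels.
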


\begin{rem}
\label{rem:proj-tight-bounds}
The bounds
given in Theorem ~\ref{thm:proj-effective-intro}
are tight and are typically attained, as explained in
Remark ~\ref{rem:exact-effective-bounds}.
Similarly, the bounds given in Theorem 
~\ref{thm:proj-generators-relations} are asymptotically
tight to within a factor of two 
for a class of divisors described in
Remark~\ref{rem:exact-noneff-bounds}.
\end{rem}

Note that the assumption given in Theorem~\ref{thm:proj-generators-relations}
that $\left\{ f_0, \ldots, f_n \right\}$ contains a basis for $H^0(\bp^m, \sco_{\bp^m}(1))$ is not a serious assumption:
If $D$ is an arbitrary divisor, then $D + 0 \cdot V(x_0) + \cdots + 0 \cdot V(x_m)$ is a divisor satisfying this assumption. Plugging this divisor into the statement
of Theorem~\ref{thm:proj-generators-relations}, we obtain the following bounds for an arbitrary divisor:
\begin{cor}
\label{cor:proj-generators-relations}
Let $D = \sum_{i = 0}^n \alpha_i D_i \in \di \bp^m \otimes_\bz \bq$ 
with $\alpha_i = \frac{c_i}{k_i}\in \bq$ in reduced form. Write
$\ell := \lcm_{0 \leq j \leq n}(k_j)$, $\ell_i := \lcm_{0\le j \le n, j \ne i}(k_j)$ and $a_i := \deg D_i$. Let $\bp^m
\cong \proj \bk[x_0, \ldots, x_m]$ and let
$f_i \in \bk[x_0, \ldots, x_m]$ such that $D_i = V(f_i)$. 

Then $R(\bp^m, D)$ is generated in degrees at most 
$\omega' := (m+1)\ell + \sum_{i = 0}^n \ell_i a_i$
with relations generated in degrees at most
\[
	\max \left(2 \omega', \frac{\max_{0 \le i \le n}
	(	\bida_i)}{\deg(D)} + \omega' \right).
\]
\end{cor}

In Section \ref{sec:hirz}, we shift our attention to Hirzebruch surfaces. Recall that for each $m \geq 0$ we define the
Hirzebruch surface $F_m := \proj \sym (\sco_{\bp^1} \oplus
\sco_{\bp^1}(m))$, viewed as a projective bundle over $\bp^1$.
Let $u,v$ the projective coordinates on the base $\bp^1$ and let $z,w$ be the projective coordinates
on the fiber, as defined more precisely at the beginning of
\ref{sec:hirz}.

\begin{thm}
\label{thm:hirz-generators-relations}
Let $D = \sum_{i = 1}^n \alpha_i D_i \in \di \hirz_m \otimes_\bz \bq$ where $\alpha_i = \frac{c_i}{k_i} \in \bq$ is written in reduced form.  
Let each $D_i = V(f_i)$, where $f_i \in \sco(a_i, b_i)$.
Let $u, v, z, w$ be the coordinates for the Hirzebruch surface
$\hirz_m$, and suppose that $\{f_1, \ldots, f_n\}$
contains bases for $\sco_{F_m}(1,0)$ and $\sco_{F_m}(0,1)$
{\rm(}i.e., two independent linear polynomials in $u, v$ and two independent linear polynomials in
$w, z$ {\rm)}.

Then $R(\hirz_m, D)$ is generated in degrees at most
\[
	\rho' := \lcm_{1\le i\le n}(k_i)\cdot \left(\sum_{1\le i\le j\le n} \bida_i \bidb_i\right) 
\]
\noindent
with relations generated in degrees at most 
$2 \rho'$.
\end{thm}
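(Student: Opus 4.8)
The plan is to reduce the theorem to an explicit, combinatorial analysis of monomial sections via the toric structure of $\hirz_m$, running in close parallel to the proof of Theorem~\ref{thm:proj-generators-relations} for $\bp^m$.

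First I would fix the Cox-ring presentation of $\hirz_m$, with homogeneous coordinates $u,v,z,w$ graded by $\Pic(\hirz_m) \cong \bz^2$; here $u,v$ carry the base class and $z,w$ the fiber classes, and the twist by $m$ is what couples the two gradings. In this language $H^0(\hirz_m, \sco(a,b))$ has a basis given by the monomials in $u,v,z,w$ of the correct bidegree with nonnegative exponents, equivalently the lattice points of a polytope $P_{(a,b)}$. The hypothesis that $\{f_1, \dots, f_n\}$ contains two independent linear forms in the base coordinates and two in the fiber coordinates lets me assume, after a linear change of coordinates, that each toric boundary divisor occurs among the $D_i$; consequently every monomial section is a product of the $f_i$, so the $f_i$ suffice as building blocks. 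I would then record that $\lfloor dD \rfloor = \sum_i \lfloor d\alpha_i \rfloor D_i$ has class $\left( \sum_i \lfloor d\alpha_i \rfloor a_i, \sum_i \lfloor d\alpha_i \rfloor b_i \right)$, and that this class is periodic in $d$ modulo $L := \lcm_i(k_i)$ up to adding $L$ copies of the integral class of $D$. This periodicity is the origin of the factor $\lcm_i(k_i)$ in both bounds.

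For generation, the crux is to prove that once $d$ exceeds $N := L \cdot \big(\sum_{1 \le i \le j \le n} a_i b_i\big)$, every monomial basis element of $H^0(\lfloor dD \rfloor)$ factors as a product of two monomial sections of strictly smaller positive $R_D$-degree. Geometrically this is the assertion that the polytope $P_d$ attached to $\lfloor dD \rfloor$ admits a Minkowski decomposition $P_{d_1} + P_{d_2}$ with $d_1 + d_2 = d$ and $d_1, d_2 \ge 1$, compatibly with lattice points; once $d > N$ the polytope is wide enough in both the base and fiber directions for such a decomposition to exist, so no generator of degree exceeding $N$ is required. The term $\sum_{i \le j} a_i b_i$ is exactly the geometric size needed to force decomposability, and the factor $L$ upgrades this to the degrees at which the floor realizes its full integral part.

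For the relations, I would invoke the standard doubling argument: given generation in degrees $\le N$, express $R_D$ as a quotient of a polynomial ring on generators of degree $\le N$, and observe that any syzygy in degree $d > 2N$ can be reduced modulo lower-degree syzygies, since any degree-$d$ monomial in the generators contains a sub-product whose degree lies in $(N, 2N]$ and which therefore already factors through lower-degree generators. This produces relations generated in degrees $\le 2N$, matching the claimed bound. The hardest step is the generation claim, and specifically the Minkowski-decomposability of $P_d$ in the presence of the twist $m$: unlike $\bp^m$, which carries a single grading, here the exponents permitted in $u,v$ are coupled to the fiber exponents in $z,w$ through $m$, so naively splitting a monomial's exponents between two factors need not yield two honest sections of integral line bundles. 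Guaranteeing that both Minkowski summands have nonnegative exponents satisfying both grading constraints is where the genuine work lies, and it is what pins down the precise shape of the bound.
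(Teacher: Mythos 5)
The fatal gap is in your treatment of relations. The ``standard doubling argument'' you invoke --- that generation in degrees at most $N$ forces relations in degrees at most $2N$ --- is not a theorem, and is false in general: $\bk[t^3,t^4]$ is generated in degrees $3$ and $4$, but its single defining relation $x^4-y^3$ sits in degree $12 > 2\cdot 4$. Your reduction step is circular: after replacing a sub-product $M'$ of degree in $(N,2N]$ by some other polynomial expression in the generators (which exists merely by surjectivity), you obtain a new syzygy of the same total degree $d$, and nothing guarantees the rewriting terminates; making it terminate requires a canonical form plus an argument that finitely many moves reach it, which is exactly the content that has to be supplied. Worse, your polytope picture sees only the binomial (toric) relations. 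The ring $R_D$ is not a semigroup ring: after your change of coordinates only $f_1,\dots,f_4$ are coordinates, and each $f_i$ with $i\ge 5$ is a polynomial $\beta_i(f_1,\dots,f_4)$, so the surjection $\psi\colon\bk[\Sigma]\to R_D$ from the semigroup ring has a nontrivial kernel, generated by multiples of the non-binomial elements $z_i-\beta_i$. Bounding the degrees of generators of $\ker\psi$ is the hardest step of the paper's proof (Lemma~\ref{lem:hirz-bound-ker-psi}, which analyzes lattice points of a translated cone), and it is a specific feature of this geometry --- not a formality --- that the answer happens to be at most $2\rho$. Indeed, in the parallel projective-space result the relation bound is $\max\bigl(2\sum_i \ell_i a_i,\ \max_i(a_i)/\deg(D)+\sum_i\ell_i a_i\bigr)$, whose second term can exceed twice the generation bound, so no doubling principle can be the underlying mechanism.

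The generation half is also asserted rather than proved, though there it is the right kind of assertion: your claim that for $d>N$ the degree-$d$ slice of the section semigroup Minkowski-decomposes, surjectively on lattice points, into slices of smaller positive degree is precisely the statement requiring proof, and the threshold $N$ is never derived. The paper proves this by exhibiting the extremal rays $e_i$ and $e_{i,j}$ of the cone $\Sigma$ (Lemma~\ref{lem:hirz-generators}) and applying Proposition~\ref{prop:cone-generation}, giving the sharper bound $\rho$ of Theorem~\ref{thm:hirz-generators-relations-full}, of which the stated bound is a coarsening. Note also that since $\lfloor d_1\alpha_i\rfloor+\lfloor d_2\alpha_i\rfloor\le\lfloor d\alpha_i\rfloor$ can be strict, the polytope attached to $\lfloor dD\rfloor$ is generally not the Minkowski sum of those attached to $\lfloor d_1D\rfloor$ and $\lfloor d_2D\rfloor$; you flag this coupling difficulty in your closing paragraph but do not resolve it. As it stands, both halves are incomplete, and the relations half rests on a false general principle that, in particular, cannot account for $\ker\psi$.
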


\begin{rem}
Theorem~\ref{thm:hirz-generators-relations} is restated with a more
precise bound in Theorem~\ref{thm:hirz-generators-relations-full}.
\end{rem}

\begin{rem}
	\label{rem:hirz-generators-relations-hypothesis}
	As in Theorem~\ref{thm:proj-generators-relations}, the condition
	of Theorem~\ref{thm:hirz-generators-relations} that $D$ contain
	independent polynomials in $u, v$ and $w, z$ is
	easily removed by replacing $D$ with $D' :=D +0 \cdot u + 0 \cdot
	v + 0 \cdot w + 0 \cdot z$,	and computing the resulting bound for
	$D'$ using either the simpler (but slightly weaker) bound
	from Theorem~\ref{thm:hirz-generators-relations} or the more
	precise bound given in Theorem~\ref{thm:hirz-generators-relations-full}.
\end{rem}

\noindent
By the classification of minimal rational surfaces
as either $\bp^2$ or a Hirzebruch surface \cite{eisenbud-harris:minimal},
Theorems \ref{thm:proj-generators-relations} and
\ref{thm:hirz-generators-relations} provide bounds on generators
and relations for the section ring of any $\bq$-divisor on any
minimal rational surface. Section \ref{sec:conc} discusses further questions.

\subsection{Modular Forms}
\label{subsection:modular-forms}
The work in this paper was motivated by 
potential applications to
calculating a presentation of certain rings of
Hilbert 
modular forms and Siegel modular forms. 
Recall that Hilbert and Siegel 
modular forms are two generalizations of modular forms to higher dimensions,
as described in \cite{geer:siegel-modular} and \cite{bruinier:hilbert-modular}.
If the bounds given in 
Theorem~\ref{thm:proj-generators-relations} and Theorem~\ref{thm:hirz-generators-relations}
could be extended to all rational surfaces, instead of just minimal ones,
they would give a bound on generators and relations for 
rings of Hilbert and Siegel modular forms parametrized by rational
Hilbert and Siegel modular surfaces.
See Section~\ref{sec:conc} for a potential approach for generalizing our results to arbitrary rational surfaces.
This would be interesting because such modular surfaces tend to be
immensely complicated. 

Since our results only apply to minimal rational surfaces, and because the rings of Hilbert and Siegel modular forms are so complex, we were unable
to use our work to compute the section ring of an explicit modular surface.
Although we did not obtain a bound on the degree of generators and relations
for general rational surfaces, the restricted class of rational varieties we consider still required significant work.

\section{Notation}
\label{section:notation}
\invisiblesubsection

In this section, we now collect various notation used throughout the paper.
Throughout,
we work over a fixed algebraically closed field $\bk$ for ease of notation,
but our results hold equally well over arbitrary fields.

\begin{rem}
\label{rem:algebraically-closed}
Because cohomology commutes with flat base change (in particular
with field extensions) the dimensions of the graded pieces of the
section ring will be preserved under base change from $\bk$ to
$\overline \bk$. Therefore generators and relations are preserved
under arbitrary base field extension, and so their minimal
degrees are preserved. Consequently, there is no harm in
assuming $\bk = \overline \bk$ for our proofs. The bounds we give
hold equally well over arbitrary fields.

Note that if $L/\bk$ is an inseparable extension
and $X$ is a scheme over $\bk$, then the base change of the
canonical divisor $(K_X)_L$ may be different than the canonical
divisor of the base change $K_{X_L}$.
Therefore, the canonical ring may not be preserved under base change
along inseparable extensions. Nonetheless, the results we give are
not affected because given a divisor on a scheme over some base field,
the structure of that particular section ring is unchanged upon
base change to the algebraic closure.
\end{rem}

Let $D$ be a (Weil) $\bq$-divisor on a rational surface $X$ of the form
\begin{align*}
	D = \sum_{i=1}^{n}\alpha_i D_i \in \di X \otimes_\bz \bq.
\end{align*}\noindent
where $n \in \bz_{\geq 0}$ indexes the number of irreducible divisors in 
the above expansion of $D$, $\alpha_i \in \bq$, and $D_i \in \di X$ is an integral
codimension 1 closed subscheme of $X$. When it is convenient to do
so, we shall sometimes start the indexing at $0$, so that $i$ runs
from $0$ to $n$. In the case $X = \bp^m$, define the degree of $D$ by $\deg D :=
\sum_{i=1}^{n}\alpha_i \cdot \deg D_i$. The floor of a $\bq$-divisor $D$ is the divisor
$\lfloor D \rfloor := \sum_{i = 1}^{n}
\lfloor \alpha_i \rfloor D_i$.

Let $R(X, D) := \bigoplus_{d \geq 0} H^0(X, \lfloor dD \rfloor)$
denote the section ring associated to the $\bq$-divisor $D$. 
We often alternatively write
$R(X, D) := \bigoplus_{d \geq 0} u^d H^0(X, \lfloor dD \rfloor)$, where $u$ is a dummy
variable to keep track of the degree.
When $X$ is understood from context, we use $R_D$ as notation for $R(X, D)$.

We use $m \in \bz_{\geq 0}$ to index the dimension of a given
projective space $\bp^m$ and the type of the Hirzebruch surface
$\hirz_m$. If $S$ is a graded ring, we denote the $d$th graded
component of $S$ by $S_d$. If $r$ is a rational number, we let
$\fr(r) := r - \lfloor r \rfloor$ denote the fractional part of $r$. 
If $D \in \di X \otimes_\bz \bq$ is an arbitrary divisor, we denote
$h^0(X, D) := \dim_\bk H^0(X,D)$.

\section{Section Rings of Projective Space}
\label{sec:proj}
Let $\bk$ be a field and let $\bp^m$ denote $m$-dimensional
projective space over $\bk$. In this section, we prove
Theorem~\ref{thm:proj-generators-relations}, which bounds the degrees
of generators and relations of the section ring of any $\bq$-divisor
on $X = \bp^m$ for all $m \geq 1$. We also prove
Theorem ~\ref{thm:proj-effective-intro} to give an explicit
description of the generators of the section ring $R_D$ when $D$ is
an effective divisor.

If $\deg D < 0$, the
section ring is concentrated in degree 0, and if
$\deg D = 0$, then the section ring has a single
generator. Therefore, for the remainder of this section,
we shall assume $\deg D > 0$.
Note that the $\bp^1$ case, in particular, restricts to the results
of \cite{dorney:canonical}.

For the remainder of this section, we shall fix $m \geq 1$ and
choose an
isomorphism $\bp^m \cong \proj \bk[x_0, \ldots, x_m]$.

\ssec{Effective Divisors on Projective Space}
\label{ssec:proj-one-point}

In this subsection, we restrict attention to the case of effective
fractional divisors $D \in \di \bp^m \otimes_\bz \bq$. We give an explicit presentation of the section ring when $D$ is an effective
divisor.

\begin{convention}
Let $\bp^m \cong \proj \bk[x_0, \ldots, x_m]$. Let
$\vec{v} = (v_0, \ldots, v_m) \in \bz^{m + 1}$.  Then write
\[
	x^{\vec{v}} := \prod_{i = 0}^{m} x_i^{v_i}.
\]
\end{convention}

\begin{defn}
\label{defn:vec-sum}
For $\vec{v} \in \bz^n$, denote $\deg \vec{v} := \sum_{i = 0}
^n v_i$.
For a given sequence of numbers $c_0, \ldots, c_r$, let 
\begin{align*}
	\mss_i := \left \{\vec{v} \in \bz_{\geq 0}^{m + 1} \; : \;
\deg \vec v = c_i \right\}.	
\end{align*}

\noindent
\end{defn}

Next, we define an ordering on these vectors, which will be used to
give a presentation for $R_D$.

\begin{defn}
\label{defn:vec-order}
Let $\vec{v}, \vec{w} \in \bz^{m+1}$. Let $i \in \{0,\ldots, m\}$
be the biggest index such that $v_i$ is nonzero
and $j \in \{0,\ldots, m\}$ be the smallest index such that $w_j$ is
nonzero. Define a partial ordering
on $\bz^{m+1}$ by $\vec{v} \prec \vec{w}$ if $i \leq j$.
\end{defn}

We are now ready to give an inductive method for computing
the generators and relations of $R_D$ in terms of $R_{D'}$ 
in the case that $D' = R_D + \alpha H$ for $H$ a hyperplane and
$\alpha$ positive.
The statement and proofs are natural generalizations of
\cite[Theorem 6]{dorney:canonical}.

\begin{thm}
\label{thm:proj-one-point}
Let $\bp^m \cong \proj \bk [x_0, \ldots, x_m].$ Let $D' \in \di \bp^m \otimes_\bz \bq$ and $D = D' + \alpha H$, with $\alpha =
\frac{p}{q} \in \bq_{>0}$, $H := V(x_k)$ a hyperplane of $\bp^m$,
and $H \notin \supp(D').$
Let
\[
	0 = \frac{c_0}{d_0} <
	\frac{c_1}{d_1} < \cdots < \frac{c_r}{d_r} = \frac{p}{q}
\]

\noindent
be the convergents of the Hirzebruch-Jung continued fraction of
$\alpha$ {\rm(}q.v. \cite[Section 2]{voight:cf} and
\cite[Section 3]{hirzebruch:cf}{\rm)}. Then, the section ring
\[
	R_D := \bigoplus_{d \geq 0} u^d H^0(\bp^m, \lfloor dD \rfloor)
\]

\noindent
has a presentation over $R_{D'}$ consisting of the $\sum_{i = 0}^{r}
{{m + c_i} \choose {c_i}}$ generators $F_i^{\vec{v}} := \frac{u^{d_i}
x^{\vec{v}}}{x_k^{c_i}}$ where $0 \leq i \leq r$ and $\vec{v} \in \bz_{\geq 0}^{m + 1}$
with $\deg \vec v = c_i$. Furthermore, the ideal of
relations $I$ is generated by the following two classes of elements.
\begin{enumerate}
	\item For each $(i, j)$ with $j \geq i + 2$ and each $\vec{v} \in \mss_i,
		\vec{w} \in \mss_j$, there is either a relation of the form
		\begin{align*}
			G_{i, j}^{\vec{v}, \vec{w}} := F_i^{\vec{v}} F_j^{\vec{w}}
			- \prod_{\vec{y} \in \mss_{h_{i, j}}} (F_{h_{i, j}}^{\vec{y}})
			^{g_{\vec{y}}} \in I,
		\end{align*}
		with $i < h_{i, j} < j$, or there is a relation of the form
		\begin{align*}
			G_{i, j}^{\vec{v}, \vec{w}} := F_i^{\vec{v}} F_j^{\vec{w}}
			- \left( \prod_{\vec{y} \in \mss_{h_{i, j}}} (F_{h_{i, j}}^{\vec{y}})^{g_{\vec{y}}}
			\right) \left( \prod_{ \vec{z} \in
			\mss_{h_{i, j} + 1}} (F_{h_{i, j} + 1}^{\vec{z}})^{g'_{\vec{z}}} \right) \in I,
		\end{align*}
		with $i < h_{i, j} < h_{i, j} + 1 < j$.
	\item For each $(i, j)$ with
		$j = i$ or $j = i + 1$ and each $\vec{v} \in \mss_i, \vec{w} \in
		\mss_j$ with $\vec{v} \not\prec \vec{w}$ (see Definition
		~\ref{defn:vec-order}) there is a relation of the form
		\begin{align*}
			&L_{i, j}^{\vec{v}, \vec{w}} := F_i^{\vec{v}} F_j^{\vec{w}}
			- F_i^{\vec{y}} F_j^{\vec{z}} \in I
		\end{align*}

\end{enumerate}

\noindent
where $\vec{y}$ and $\vec{z}$ are the unique
vectors in $\mss_i$ and $\mss_j$, respectively, such that $\vec{y}
+ \vec{z} = \vec{v} + \vec{w}$ and $\vec{y} \prec \vec{z}$.
\end{thm}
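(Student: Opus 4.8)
The plan is to realize $R_D$ as the $R_{D'}$-algebra generated by the $F_i^{\vec v}$ and to control it entirely through the combinatorics of the Hirzebruch--Jung convergent vectors $(d_i, c_i)$, generalizing the $\bp^1$ argument of \cite[Theorem 6]{dorney:canonical}. The essential geometric input is Proposition~\ref{prop:pm-span-and-basis}: every section of $H^0(\bp^m, \lfloor dD\rfloor)$ is spanned by rational monomials whose only pole lies along $H = V(x_k)$, of order at most $\lfloor d\alpha\rfloor$, the remaining behavior being governed by $D'$. Thus, relative to $R_{D'}$, the new content of $R_D$ in degree $d$ is indexed by a pole order $0 \le c \le \lfloor d\alpha\rfloor$ along $H$ together with a degree-$c$ monomial in the $m$ variables other than $x_k$ (so that $v_k = 0$, matching the count $\binom{m+c_i-1}{c_i}$); this is exactly the data recorded by the $F_i^{\vec v}$. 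First I would record the two standard facts about the convergents that drive everything: consecutive vectors are unimodular, $c_{i+1}d_i - c_i d_{i+1} = 1$, and three consecutive satisfy the Hirzebruch--Jung recursion $(d_{i-1}, c_{i-1}) + (d_{i+1}, c_{i+1}) = b_i(d_i, c_i)$ with $b_i \ge 2$. Together these show that the lattice points beneath the line of slope $\alpha$ form a numerical monoid generated by the $(d_i, c_i)$, and that each such point lies in the cone of a single consecutive pair.

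Generation then follows by writing the $H$-pole datum $(d, \lfloor d\alpha\rfloor)$ of a basis monomial of $(R_D)_d$ from Proposition~\ref{prop:pm-span-and-basis} as a non-negative integer combination of the $(d_i, c_i)$ via the monoid statement; factoring accordingly expresses the monomial as a product of $F_i^{\vec v}$'s times an element of $R_{D'}$. Validity of the relations is a direct check: the Type~(2) relations hold because both sides equal the single rational monomial $u^{d_i + d_j} x^{\vec v + \vec w}/x_k^{c_i + c_j}$, while the Type~(1) relations are precisely the unimodularity and recursion identities expressing a non-adjacent sum $(d_i + d_j, c_i + c_j)$ through the intervening convergents (one or two of them, as dictated by the recursion).

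The crux is completeness and minimality. I would take the order $\prec$ of Definition~\ref{defn:vec-order} together with a compatible term order on words in the generators, and show that the two relation families reduce any product to a normal form: a monomial supported on at most two consecutive levels $i, i+1$ whose $\vec v$-factors are sorted increasingly under $\prec$. The Type~(1) relations collapse any spread across non-adjacent levels, and the Type~(2) relations straighten the monomial data within the two surviving levels; a well-founded measure (first the level-spread, then the number of $\prec$-inversions) shows the reduction terminates. Counting normal forms in each degree against the explicit basis of Proposition~\ref{prop:pm-span-and-basis} shows no further relations are needed, and inspecting leading terms shows that each listed generator and relation is indispensable, yielding minimality.

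The main obstacle is this last step: orchestrating the vertical continued-fraction reductions (across levels $i$) and the horizontal straightening of the monomial vectors $\vec v$ simultaneously, and confirming that the two families overlap cleanly, so that every $S$-polynomial-type obstruction already reduces to zero. This interaction is exactly where the multivariable case genuinely departs from the $\bp^1$ situation, since the vectors $\vec v$ have no analogue there; I expect the bulk of the work to be packaging the normal-form reduction into a terminating, confluent rewriting system and checking confluence on the critical pairs arising between a Type~(1) and a Type~(2) relation sharing a common factor.
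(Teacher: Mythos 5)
Your proposal is correct and follows essentially the same route as the paper's proof: generation via the adjacent-convergent cone decomposition (the paper invokes O'Dorney's lemma for exactly the monoid fact you state), validity of the $G_{i,j}^{\vec v, \vec w}$ and $L_{i,j}^{\vec v, \vec w}$ from the same convergent identities, and completeness by reducing any monomial in the generators to the sorted, two-consecutive-level canonical form and comparing against the basis of Proposition~\ref{prop:pm-span-and-basis}. The only differences are minor: the paper establishes minimality of the generators by a mediant argument on the convergents rather than by inspecting leading terms, and it carries out your rewriting step directly (first apply the $G$'s to collapse to adjacent levels, then the $L$'s to sort) without a formal critical-pair/confluence analysis.
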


\sssec*{Idea of Proof:}
The proof follows in three steps. 
First, since the
$F_{i}^{\vec{v}}$ generate
all of $R_D$ over $R_{D'}$, and the leading terms of
$G_{i,j}^{\vec v, \vec w}$ lies in $R_{D'}$, we obtain the
relations $G_{i, j}^{\vec{v}, \vec{w}}$. 
Then, we derive the relations
$L_{i, j}^{\vec{v}, \vec{w}}$ by considering when products of
generators in neighboring degrees are equal. Finally, we demonstrate
that $G_{i, j}^{\vec{v}, \vec{w}}$'s and $L_{i, j}^{\vec{v}, \vec{w}}$'s
generate all of the relations by using them to reduce arbitrary
elements of
$R_D$ to a canonical form.

\begin{proof}
As a first step, we reduce to the case $D' = 0$.
Using that $D$ is effective, we can write
$R_D = R_{D'} + R_{\alpha H}$ (where the sum is nearly a direct sum, except 
the $(R_{D'})_d \cap (R_{\alpha H})_d$ is the one dimensional subspace generated by $u^d$).
Then, to give a presentation of $R_D$ over $R_D'$,
it suffices to give a presentation of $R_{\alpha H}$:
generators of $R_{\alpha H}$ map to generators of $R_D$ over $R_{D'}$ under the inclusion $\iota: R_{\alpha H} \rightarrow R_D$
and relations map to a full set of relations for $R_D$ over $R_{D'}$.
Hence, for the remainder of the proof, we can assume $D' = 0$.

%
%

Next we show that $F^{\vec v}_i$ generate all of $R_D$. In the case $m = 1$, O'Dorney \cite[Theorem 6]
{dorney:canonical} demonstrates that each lattice point $(\beta, \gamma) \in
\bz_{\geq 0}^2$ with $\gamma \leq \beta \alpha$ lies in the $\bz_{\geq 0}$ span
of $(d_h, c_h)$ and $(d_{h + 1}, c_{h + 1})$ for
some $h \in \{0, \ldots, r\}$. A similar strategy works in the case $m > 1$. Let $(\beta, \gamma) = \lambda
(d_h, c_h) + \kappa (d_{h + 1}, c_{h + 1})$ for $\lambda, \kappa \in
\bz_{\geq 0}$. Any element $\frac{u^{\beta}
x^{\vec{v}}} {x_k^{ \gamma}} \in R_D$ is expressible as
\begin{align*}
	\frac{u^{\beta} x^{\vec{v}}} {x_k^{\gamma}} = \left(\frac{u^{d_h}}
	{x_k^{c_h}}\right)^{\lambda} \left(\frac{u^{d_{h + 1}}}
	{x_k^{c_{h + 1}}}\right)^{\kappa} x^{\vec{v}}.
\end{align*}

\noindent
We can then write $\vec{v}  = \sum_{\tau=1}^{\lambda}\vec{w}_{(\tau)} +
\sum_{\eta=1}^{\kappa} \vec z_{(\eta)}$ with $\vec w_{(\tau)} \in \mss_h$ and
$\vec z_{(\eta)} \in \mss_{h+1}$ to give a decomposition
\begin{align}
\label{eqn:one-point-canonical-form}
	\frac{u^{\beta} x^{\vec{v}}} {x_k^{\gamma}}	= \prod_{\tau = 1}
	^{\lambda} \frac{u^{d_h} x^{\vec{w}_{(\tau)}}} {x_k^{c_h}}
	\prod_{\eta = 1}^{\kappa} \frac{u^{d_{h + 1}} x^{\vec{z}_{(\eta)}}}
	{x_k^{c_{h + 1}}}
\end{align}
\noindent
consisting of products of generators $F_h^{\vec{w}_{(\lambda)}}$
and $F_{h + 1}^{\vec{z}_{(\eta)}}$ which are in the form
prescribed in the theorem statement. Since we wrote an arbitrary monomial
$\frac{u^{\beta} x^{\vec{v}}}{x_k^\gamma} \in R_D$ as a product of generators,
this shows that the $F_i^{\vec v}$ generate $R_D$.

Next, we show that the relations given in the statement of the theorem generate all relations. 
In particular, if $j \geq i + 2$ then $F_i^{\vec{v}} F_j^{\vec{w}}$
has a decomposition of the form ~\eqref{eqn:one-point-canonical-form} of 
products of generators in adjacent degrees where $h$
depends on $i$ and $ j$, so we denote $h_{i, j} := h \in \{1, \ldots, r\}$. We 
also have that $i \leq h_{i, j} < j$ since $(d_i + d_j, c_i + c_j)$ is in the
$\bz_{\geq 0}$-span of $(d_{h_{i, j}}, c_{h_{i, j}})$ and
$(d_{h_{i, j} + 1}, c_{h_{i, j} + 1})$. Furthermore,
$h_{i, j} \neq i$ and $h_{i, j} \neq j - 1$
as follows from an analogous proof to that given by O'Dorney
\cite[Theorem 6]{dorney:canonical} for the case of
$\bp^1$. This gives the relations
$G_{i, j}^{\vec{v}, \vec{w}}$.

One can use the relations $G_{i, j}^{\vec{v}, \vec{w}}$
to transform any monomial in the $F_i^{\vec{v}}$'s involving
indices that differ by more than $1$ to a monomial in the $F_i
^{\vec{v}}$'s involving indices that differ by at most $1$.

We also have relations involving generators in consecutive
indices. Suppose $F_i^{\vec{v}}$ and $F_j^{\vec{w}}$ are
generators with $j = i$ or $j = i + 1$ and $\vec{v} \in
\mss_i, \vec{w} \in \mss_j$ with $\vec{v} \not\prec \vec{w}$.
Let $\vec{y}$ and $\vec{z}$ be the unique vectors in $\mss_i$ and
$\mss_j$, respectively, such that $\vec{y} + \vec{z} = \vec{v} +
\vec{w}$ and $\vec{y} \prec \vec{z}$ (i.e.~ the nonzero indices
of $\vec{y}$ followed by those of $\vec{z}$ give an increasing
sequence). Then we see that
\begin{align*}
	F_i^{\vec{v}} F_j^{\vec{w}} = x^{\vec{v} + \vec{w}}
	\left(\frac{u^{d_i}}{x_k^{c_i}}\right)
	\left(\frac{u^{d_j}}{x_k^{c_j}}\right)
	= x^{\vec{y} + \vec{z}}
	\left(\frac{u^{d_i}}{x_k^{c_i}}\right)
	\left(\frac{u^{d_j}}{x_k^{c_j}}\right)
	= F_i^{\vec{y}} F_j^{\vec{z}},
\end{align*}

\noindent 
which give the relations $L_{i, j}^{\vec{v}, \vec{w}}$.

Now, we may apply the relations $L_{i, j}^{\vec{v}, \vec{w}}$
to any monomial in the $F_i^{\vec{v}}$'s involving
indices that differ by at most $1$ to produce the canonical form
\begin{align*}
	\prod_{\tau=1}^\lambda (F_i^{\vec{v}_{(\tau)}})^{g_{\vec{v}_{(\tau)}}}
	\prod_{\eta=1}^\kappa (F_{i+1}^{\vec{v}_{(\eta)}})^{g_{\vec{v}_{(\eta)}}}
\end{align*}

\noindent
where $\vec{v}_{(1)} \prec \vec{v}_{(2)} \prec \ldots \prec
\vec{v}_{(\lambda)} \prec \vec{w}_{(1)} \prec \ldots \prec
\vec{w}_{(\kappa)}$. Consequently, the relations of form
$G_{i, j}^{\vec{v}, \vec{w}}$ and the relations of form
$L_{i, j}^{\vec{v}, \vec{w}}$ generate all the relations among the
$F_{i}^{\vec{v}, \vec{w}}$.
\end{proof}

\begin{rem}
\label{rem:proj-grobner}
	In the case that $D = \alpha_0 D_0$ is supported on a single
	hypersurface and $D' = 0$, the relations in Theorem ~\ref{thm:proj-one-point}
	form a reduced Gr\"{o}bner basis with respect to the ordering
	given in Definition~\ref{defn:vec-order}.
\end{rem}

Theorem ~\ref{thm:proj-one-point} gives an inductive procedure to
compute presentations of effective divisors which are supported on
hyperplanes. However, for $m \geq 2$, there are hypersurfaces which
are not unions of hyperplanes. We now address this general case, giving an inductive presentation of section rings of effective divisors, and a tight bound on the degrees of their generators and
relations.

\begin{proof}[Proof of Theorem~\ref{thm:proj-effective-intro}]
We proceed by induction on $n$.
If $n = 0$, i.e.~ $D = 0$, then we are done.
Now, we inductively add hypersurfaces.
Let $D' = \sum_{i = 0}^{n} \alpha_i D_i \in \di \bp^m \otimes_\bz \bq$, 
and assume the theorem holds for $D'$. It suffices to show
the theorem holds for $D \in \di \bp^m \otimes_\bz \bq$, where $D = D' + \alpha C$ for some
degree $\pdeg$ hypersurface $C$.
If $C$ were a hyperplane, we would then be done, by Theorem ~\ref{thm:proj-one-point}.

To complete the theorem, we reduce the case that $C$ is a general hypersurface
to the case that $C$ is a hyperplane, by using the Veronese embedding.

If $C$ is of degree $\pdeg$, consider the Veronese embedding
$\nu_\pdeg^m \colon \bp^m \rightarrow \bp^{\binom{m+\pdeg}{\pdeg}-1}$
so that the image of $C$ is the intersection of 
a hyperplane in $\bp^{\binom{{m + \pdeg}}{\pdeg} - 1}$ with $\nu_\pdeg^m(\bp^m)$.
Now, the ring $R_{\alpha C}$ is isomorphic to the
$\pdeg$ Veronese subring of 
\[R_{\alpha V(x_0)} = \bigoplus_{d \geq 0} u^d H^0(\bp^m, d \alpha V(x_0)).\]
Therefore, we can bound the degree of generators and relations of
$R_D$ over $R_{D'}$ by the degree of generators and
relations for $R_C \cong R_H.$
This reduces the case of a hypersurface $C$ to
a hyperplane $H$, completing the proof by Theorem
~\ref{thm:proj-one-point}.
\end{proof}

\begin{rem}
\label{rem:exact-effective-bounds}
The proof of Theorem ~\ref{thm:proj-effective-intro} not only gives bounds on the degrees of the generators and relations, but actually gives an explicit method for computing the presentation. 
Also, a minimal generating set of $R_D$ over $R_{D'}$ in 
Theorem~\ref{thm:proj-one-point}
can be given as a subset of the generating set given in 
Theorem~\ref{thm:proj-one-point}. Further,
one can verify a minimal generating
set necessarily contains generators in each of the degrees $d_0, \ldots, d_r$
using the definition of 
Hirzebruch-Jung continued fractions.
In particular,
the bounds of
Theorem ~\ref{thm:proj-effective-intro}
are tight. The generator bound is always achieved and the bound on the relations is achieved if $m \geq 2$.
\end{rem}

Before giving bounds for arbitrary divisors on projective space in Subsection~\ref{ssec:arbitrary-projective},
we give a detailed example of the generators and relations for a
section ring associated to an effective divisor on projective space.
\begin{example}
	\label{example:effective-example}
	In this example, we work out generators and relations 
	for the section ring $R(\bp^2, D)$ with
	\begin{align*}
		D := \frac{1}{6}V(x^2 + y^2 + z^2) + \frac{2}{5}V(x).
	\end{align*}
	We loosely follow the
	algorithm
	described in the course of the 
	proof of Theorem~\ref{thm:proj-effective-intro} and use notation from the statement of Theorem~\ref{thm:proj-effective-intro}.

	For ease of notation, let $h := x^2 + y^2 + z^2$
	and let $D' := \frac{1}{6}V(h)$.
	We first compute generators and relations for $R_{D'}$,
	then compute generators and relations for $R_D$ over $R_{D'}$,
	and finally put them together to obtain generators and relations
	for $R_D$.
	
	To start, we compute generators and relations for $R_{D'}$.
	Indeed, applying \ref{thm:proj-one-point} and scaling
	the degrees by 2, we see that $\frac{1}{6}$ has convergents
	given by
	\begin{align*}
		0 = \frac{0}{1} < \frac{1}{6},
	\end{align*}
	and so the generators are given by 
	\begin{align}
	\begin{aligned}
	\label{equation:d'-generators}
		F_0^{'(0,0,0)} &= u^1 \\
		F_1^{'(2,0,0)} &= \frac{u^6 x^2}{h}, \\
		F_1^{'(1,1,0)} &= \frac{u^6 xy}{h}, \\ 
		F_1^{'(1,0,1)} &= \frac{u^6 xz}{h}, \\
		F_1^{'(0,2,0)} &= \frac{u^6 y^2}{h}, \\
		F_1^{'(0,1,1)} &= \frac{u^6 yz}{h},  \\
		F_1^{'(0,0,2)} &= \frac{u^6 z^2}{h}.
	\end{aligned}
	\end{align}
Further, the relations in $R_{D'}$ are given by 	
\begin{align}	
			\begin{aligned}
	\label{equation:d'-relations}
		L_{1,1}^{'(1,1,0), (1,0,1)} &= \frac{u^6 xy}{h}\frac{u^6 xz}{h} - \frac{u^6x^2}{h}\frac{u^6yz}{h},\\
		L_{1,1}^{'(1,0,1), (0,1,1)} &= \frac{u^6 xz}{h}\frac{u^6 yz}{h} - \frac{u^6xy}{h}\frac{u^6z^2}{h},\\
		L_{1,1}^{'(1,0,1), (0,2,0)} &= \frac{u^6 xz}{h}\frac{u^6 y^2}{h} - \frac{u^6xy}{h}\frac{u^6yz}{h}, \\
		L_{1,1}^{'(1,1,0), (1,1,0)} &= \frac{u^6 xy}{h}\frac{u^6 xy}{h} - \frac{u^6x^2}{h}\frac{u^6y^2}{h}, \\
		L_{1,1}^{'(1,0,1), (1,0,1)} &= \frac{u^6 xz}{h}\frac{u^6 xz}{h} - \frac{u^6x^2}{h}\frac{u^6z^2}{h}, \\
		L_{1,1}^{'(0,1,1), (0,1,1)} &= \frac{u^6 yz}{h}\frac{u^6 yz}{h} - \frac{u^6y^2}{h}\frac{u^6z^2}{h}.
\end{aligned}
\end{align}

	Next, we describe generators and relations for $R_D$ over $R_{D'}$.
	Indeed, in this case, $\frac{2}{5}$ has convergents given by
	\begin{align*}
		0 = \frac{0}{1} < \frac{1}{3} < \frac{2}{5}.
	\end{align*}
	Therefore, the generators of $R_D$ over $R_{D'}$ are given by
	\begin{align}
		\begin{aligned}
		\label{equation:d-generators}
		F_0^{(0,0,0)} &= u^1, \\
		F_1^{(0,1,0)} &= \frac{u^3y}{x}, &F_1^{(0,0,1)} &= \frac{u^3z}{x}, \\
		F_2^{(0,2,0)} &= \frac{u^5y^2}{x^2}, &F_2^{(0,1,1)} &= \frac{u^5yz}{x^2}, &F_2^{(0,0,2)} &= \frac{u^5z^2}{x^2}.
	\end{aligned}
	\end{align}
Note that the generator $F_3^{(1,0,0)} = \frac{u^3x}{x}$ could be included, but it is redundant as it is equal to $(u^1)^3$.
	Similarly, the generators $\frac{u^5x^2}{x^2}, \frac{u^5xy}{x^2}, \frac{u^5xz}{x^2}$ are redundant.
	Furthermore, we have relations given by
	\begin{align}
		\begin{aligned}
		\label{equation:d-relations}
		G_{0,2}^{(0,0,0),(0,2,0)} &= u \cdot \frac{u^5 y^2}{x^2} - \left( \frac{u^3y}{x} \right)^2 \\
		G_{0,2}^{(0,0,0),(0,1,1)} &= u \cdot \frac{u^5 yz}{x^2} -  \frac{u^3y}{x} \frac{u^3z}{x} \\
		G_{0,2}^{(0,0,0),(0,0,2)} &= u \cdot \frac{u^5 z^2}{x^2} - \left( \frac{u^3z}{x} \right)^2 \\
		L_{1,2}^{(0,0,1), (0,2,0)} &= \frac{u^3z}{x}\cdot \frac{u^5y^2}{x^2} - \frac{u^3y}{x}\frac{u^5yz}{x^2} \\
		L_{1,2}^{(0,0,1), (0,1,1)} &= \frac{u^3z}{x}\cdot \frac{u^5yz}{x^2} - \frac{u^3y}{x}\frac{u^5z^2}{x^2} \\
		L_{2,2}^{(0,1,1),(0,1,1)} &= \frac{u^5yz}{x^2}\cdot \frac{u^5 yz}{x^2} - \frac{u^5y^2}{x^2}\frac{u^5z^2}{x^2}.
\end{aligned}
\end{align}

	Then, combining the above, generators for the ring $R_D$ are given by	
	\eqref{equation:d'-generators} together with
	\eqref{equation:d-generators}
	and relations are given by
	\eqref{equation:d'-relations} together with \eqref{equation:d-relations}.
\end{example}

\ssec{Bounds for Arbitrary Divisors on Projective Space}
\label{ssec:arbitrary-projective}
We now offer bounds on generators and relations of $R_D$ for a general $\bq$-divisor $D \in \di \bp^m \otimes_\bz \bq$.
Write \begin{align*}
	D = \sum_{i=0}^{n}\alpha_i D_i \in \di \bp^m \otimes_\bz \bq
\end{align*}
For the remainder of this section, we shall make the additional assumption that 
\begin{align} \label{eqn:pm-basis-assumption} f_0, \ldots, f_{m}
	\text{ are independent linear forms.} \end{align}
This may necessitate the inclusion of ``ghost divisors'' $D_i$ with coefficients $\alpha_i = 0$. 

The main aim of this section is to prove Theorem
~\ref{thm:proj-generators-relations}.
Having justified the necessity of adding ghost divisors, we proceed to bound the
number of generators and relations of arbitrary $\bq$-divisors in 
projective space. 
In Proposition~\ref{prop:pm-span-and-basis}, we record a general proposition describing
a basis for $H^0(\bp^m, dD)$.
We bound the generators in Lemma ~\ref{lem:proj-generators}, and we use Lemmas ~\ref{lem:composite-map}, ~\ref{lem:bound-ker-chi}, and ~\ref{lem:proj-relations-psi} to bound the 
degree of relations in the proof of Theorem ~\ref{thm:proj-generators-relations}. Proposition ~\ref{prop:cone-generation}, Lemma ~\ref{lem:composite-map}, and Lemma ~\ref{lem:bound-ker-chi} are quite 
general and will also be used in Section ~\ref{sec:hirz} to bound the 
degree of generators and relations on Hirzebruch surfaces.
However, before moving on to the proof of ~\ref{thm:proj-generators-relations}, we justify the importance of assumption
~\eqref{eqn:pm-basis-assumption} with several illustrative examples.

\sssec*{The importance of ghost divisors}
\label{sssec:ghost-divisors}

\begin{example}
\label{eg:hyperplane}
In this example, we show that the naive generalization of
~\cite[Theorem 8]{dorney:canonical} of generation in degree at most
$\sum_{i=0}^{n}\ell_i$ cannot possibly hold. The reason for this is
that the divisors may be
expressible as functions in $m$ of the $m+1$ variables on $\bp^m$.

Concretely, take $D = \frac{1}{2}H_0 - \frac{1}{3}H_1$ where $H_0 = V(x_0),
H_1 = V(x_1)$ are two coordinate hyperplanes in $\bp^2$. Then, $R_D$
has generators in degree $2$ and $3$ which can be written as $u^2
\frac{x_1}{x_0}, u^3 \frac{x_1}{x_0}.$ In fact, for all degrees
less than $5$, the elements of $R_D$ can all be expressed as
rational functions in $x_0, x_1$. However, in degree $6$, there is
$u^6 \cdot \frac{x_1^2 x_2}{x_0^3}$. Since this
involves $x_2$, it must be a generator.

This example generalizes slightly to any divisor of the form $D =
\frac{1}{k}H_0 - \frac{1}{k+1}H_1 \in \di \bp^2 \otimes_\bz \bq,$ with
$k \in \bn$, showing that there will always exist a generator in
degree $k(k + 1)$.

This example further generalizes to the following situation:
Suppose 
\[
	D = \sum_{i=0}^{n} \frac{\num_i}{\den_i}D_i \in \di \bp^m \otimes_\bz \bq,
\]
where $\deg D_i = \bida_i$ and $\deg D = \frac{1}{\lcm
_{0 \leq i \leq n}(\den_i \cdot \bida_i)}$. Then, if $D_i = V(f_i)$
where all $f_i$ can be written as a polynomial function in $x_0,
\ldots, x_{m-1},$ it follows that $R_D$ always has a generator in degree
$\lcm(\den_i \cdot \bida_i)$.
\end{example}

As illustrated in Example ~\ref{eg:hyperplane}, 
when all components in the support of divisor can be 
written in terms of $m$ of the $m+1$ variables on $\bp^m$,
we cannot hope to bound the degree of generation
by anything less than the sum of the least common multiples of
the denominators. This issue can easily be circumvented by adding in ``ghost
divisors.'' That is, we may add divisors of the form $0 \cdot H_i$ to
$D$, and reorder so that if $D = \sum_{i=0}^{n}\alpha_i V(f_i)$,
then $f_0, \ldots, f_m$ are independent linear functions in
$x_0,\ldots, x_m$.

\begin{rem}
\label{rem:proj-two-points}
We cannot extend Theorem~\ref{thm:proj-one-point} to the case when
$D$ is supported at two hypersurfaces with arbitrary rational (non-effective)
coefficients in the same manner that O'Dorney does for the
$\bp^1$ case \cite[Section 4]{dorney:canonical}. As shown
in Example ~\ref{eg:hyperplane}, the degrees of generation of the
section ring of a general $\bq$-divisor supported on two
hyperplanes cannot be bounded so tightly. The two-point
$\bp^1$ result leverages the fact that $\bp^1$ has precisely two
independent coordinates, so that two distinct integral subschemes
cannot represent equations of only $m$ of the $m+1$ coordinates.
\end{rem}

In Example ~\ref{eg:radical}, we show that it is still, in general,
necessary to add ghost divisors, even when the irreducible components
of a divisor are not all
expressible as functions in $m$ of the $m+1$ variables on $\bp^m$.

\begin{example}
\label{eg:radical}
Consider $D :=\frac{-1}{5}V(x_0^2 + x_1^2 + x_2^2) + \frac{1}{7}V(x_0^2 + x_1^2 +
x_3^2) + \frac{1}{17}V(x_0^2 + x_2^2 + x_3^2) - \frac{1}{596}V(x_1^2 + x_2^2 +
x_3^2)$. In degree $355216 = 5 \cdot 7 \cdot 17 \cdot 596$, $R_D$ has dimension
$6$.
However, for all $d \leq 355216,$ $h^0(\bp^3, dD) \leq 1$
and 
$h^0(\bp^3, dD)  = 1$ precisely when $\lfloor d D \rfloor = 0$ 
(so in this case, $H^0(\bp^3, d D)$ corresponds to the constant functions).
Therefore $R_D$ has a generator in degree $355216$.
Hence, we cannot hope to bound the degree
of generation of $R_D$ as a linear combination of $\ell_i$,
in analogy to \cite[Theorem 8]{dorney:canonical}
unless we require that $D$ includes ghost divisors. That is, unless
$D_0,\ldots D_m$ are taken to be linearly independent hyperplanes.
\end{example}

\sssec*{A basis for sections on projective space}
\label{sssec:basis-projective}

In order to prepare ourselves to prove Theorem~\ref{thm:proj-generators-relations},
we will need the following simple description of
a basis for $H^0(\bp^m, dD)$.

\begin{prop}
\label{prop:pm-span-and-basis}
Assuming Equation~\ref{eqn:pm-basis-assumption},
the functions $u^d \cdot \prod_{i=0}^n f_i^{c_i} \in (R_D)_d$ \rm{(}recall $u$ is a dummy variable keeping
track of the degree\rm{)}
satisfying
both of the following conditions

\begin{enumerate}
	\item $\sum_{i=0}^{n} c_i \cdot \bida_i = 0$
	\item $c_i \geq - \lfloor d \alpha_i \rfloor$
\end{enumerate}

\noindent
for $c_0, \ldots, c_n \in \bz$
span $H^0(\bp^m, dD)$ over $\bk$. Furthermore, such functions 
that also satisfy
\begin{enumerate}[resume]
	\item $c_i = -\lfloor d\alpha_i \rfloor \text{ for all } i > m$
\end{enumerate}

\noindent
form a basis for $H^0(\bp^m, dD)$ over $\bk$.
\end{prop}

\begin{proof}
By definition of $H^0(\bp^m,dD)$, functions satisfying conditions 
(1) and (2) lie in $H^0(\mathbb{P}^m,dD)$.
Conditions (1) and (2) are also necessary for some monomial in the $f_i$ to lie in
$H^0(\mathbb{P}^m,dD)$.
Since the monomials in the $f_i$ span $R_D$, it follows the monomials satisfying
$(1)$ and $(2)$ span $(R_D)_d = H^0(\bp^m, dD)$.
To complete the proof, it
suffices to check functions satisfying conditions (1)-(3) form
a basis of $H^0(\bp^m,dD)$. There are $\binom{m+ \deg \lfloor dD \rfloor }{m}$
functions satisfying conditions (1)-(3). However we know
$h^0(\bp^m,dD) = \binom{m+ \deg \lfloor dD \rfloor }{m},$ so it suffices to show
that those monomials satisfying conditions (1)-(3) are
independent. 
To see why these are independent, observe that monomials satisfying (1)-(3) are all of the form
$f_0^{c_0} \cdots f_m^{c_m} \cdot g$ for the fixed monomial
$g = f_{m+1}^{-\lfloor d\alpha_{m+1} \rfloor} \cdots f_n^{-\lfloor d\alpha_{m+1} \rfloor}$.
For any fixed $N \in \bz$, the set 
\begin{align*}
 \left\{ f_0^{c_0} \cdots f_m^{c_m} : (c_0, \ldots, c_m) \in \bz^{m+1}, \sum_{i=0}^m c_i = N \right\}	
\end{align*}
forms an independent set over $\bk$.
Therefore, multiplying the monomials in the above set with the $g$ also forms an independent set, and these are precisely the monomials satisfying (1)-(3) 
with $N = -\deg g$.
\end{proof}

\sssec*{Bounding the generators}
\label{sssec:bounding-generators}

We now develop the tools to bound the degrees of generators for $R_D$
for $D$ an arbitrary divisor on $\bp^m$.

Recall that for a semigroup $\Sigma \subset \bz^m$ we say $e_0, \ldots, e_n \in \Sigma$ are a set of {\bf extremal rays}
for $\Sigma$ if 
$\Sigma$ is contained in the $\bq_{\geq 0}$ span of
$e_0, \ldots, e_n$.

\begin{prop}
\label{prop:cone-generation}
Let $n \in \bz,$ let $\alpha_0, \ldots, \alpha_n \in \bq$, and let
$a_i, b_i \in \bz$ with $0 \leq i \leq n.$ Define
\begin{align*}
	\Sigma := \left \{(d, c_0, \ldots, c_n) \in \bz^{n + 2} \colon c_i \geq -
	d \alpha_i, 0 \leq i \leq n \text{ and } \sum_{i = 0}^{n} a_i =
	\sum_{i	= 0}^{n}b_i = 0 \right \}.
\end{align*}

\noindent
Suppose $e_0, \ldots, e_t \in \Sigma$ with $e_i = (\pdeg_i, c_0^i,
\ldots, c_n^i)$ are a set of extremal rays of $\Sigma$.

Then, as a semigroup, $\Sigma$ is generated by
elements whose first coordinate is less than $\sum_{i = 0}^{t}
\pdeg_i$. Furthermore, every element $\sigma \in \Sigma$ can be
written in a canonical form 

\begin{equation}
	\label{eqn:sigma-canonical-form}
	\sigma = \lambda + \sum_{i = 0}^{t} \zeta_i e_i
\end{equation}

\noindent
with $\zeta_1, \ldots, \zeta_t \in \bz_{\geq 0}$, $0 \leq s_i < 1$, and $\lambda = \sum_{i = 0}^{r} s_i e_i$ 
so that the
first coordinate of $\lambda$ is less than $\sum_{i=0}^{t}\pdeg_i$.
\end{prop}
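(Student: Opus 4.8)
The plan is to recognize $\Sigma$ as the set of lattice points of a rational polyhedral cone and then to run the standard fundamental-parallelepiped argument (a Gordan-type normal form). First I would observe that all of the conditions defining $\Sigma$ — the inequalities $c_i + d\alpha_i \geq 0$ together with the two linear equalities — are homogeneous with rational coefficients, so they cut out a rational polyhedral cone $\Sigma_\bq \subseteq \bq^{n+2}$ with $\Sigma = \Sigma_\bq \cap \bz^{n + 2}$. By hypothesis $\Sigma_\bq$ is the $\bq_{\geq 0}$-span of the extremal rays $e_0, \ldots, e_t$, and since the first coordinate records a nonnegative degree we have $\pdeg_i \geq 0$ for every $i$. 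The whole argument then hinges on comparing an arbitrary lattice point against the parallelepiped spanned by the $e_i$.

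To establish the canonical form \eqref{eqn:sigma-canonical-form}, I would take $\sigma \in \Sigma$. Since $\sigma \in \Sigma_\bq$, I can write $\sigma = \sum_{i = 0}^{t} \mu_i e_i$ with $\mu_i \in \bq_{\geq 0}$. Setting $\zeta_i := \lfloor \mu_i \rfloor \in \bz_{\geq 0}$ and $s_i := \mu_i - \lfloor \mu_i \rfloor \in [0, 1)$, define $\lambda := \sigma - \sum_{i = 0}^{t} \zeta_i e_i = \sum_{i = 0}^{t} s_i e_i$. Then $\lambda \in \bz^{n + 2}$, being a $\bz$-linear combination of the integer vectors $\sigma$ and the $e_i$; it lies in $\Sigma_\bq$, being a $\bq_{\geq 0}$-combination of the $e_i \in \Sigma_\bq$; hence $\lambda \in \Sigma_\bq \cap \bz^{n + 2} = \Sigma$. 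Finally, the first coordinate of $\lambda$ equals $\sum_{i = 0}^{t} s_i \pdeg_i < \sum_{i = 0}^{t} \pdeg_i$, using that each $s_i < 1$ and each $\pdeg_i \geq 0$. This is exactly the asserted decomposition.

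For the semigroup statement, the identity $\sigma = \lambda + \sum_{i = 0}^{t} \zeta_i e_i$ already exhibits every element of $\Sigma$ as a $\bz_{\geq 0}$-combination of the remainder $\lambda$ and the extremal rays. The term $\lambda$ has first coordinate below $\sum_{i} \pdeg_i$ by the previous paragraph, and each $e_i$ has first coordinate $\pdeg_i \leq \sum_{j} \pdeg_j$, with strict inequality as soon as some other ray carries positive first coordinate — the case that arises in all of our applications. Thus $\Sigma$ is generated as a semigroup by its elements of first coordinate strictly less than $\sum_{i = 0}^{t} \pdeg_i$.

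I expect the main subtlety to be that the extremal rays need not be linearly independent, so the coefficients $\mu_i$, and therefore $\lambda$ and the $\zeta_i$, are not uniquely determined by $\sigma$; the word \emph{canonical} should be read as ``the normal form attached to a fixed choice of the $\mu_i$'' rather than a genuinely coordinate-free invariant, and the proof only claims existence of such a representation. The one further point requiring care is the nonnegativity $\pdeg_i \geq 0$, which is precisely what upgrades the bound $s_i < 1$ to the strict inequality on first coordinates, and which I would justify from the fact that the first coordinate $d$ parametrizes a nonnegative degree throughout.
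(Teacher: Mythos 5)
Your proof is correct and follows essentially the same route as the paper's: write $\sigma$ as a $\bq_{\geq 0}$-combination of the extremal rays, split each coefficient into its floor and fractional part, and take $\lambda$ to be the fractional remainder. You in fact supply details the paper elides --- the verification that $\lambda$ is an integral point (hence lies in $\Sigma$) and the caveat about strictness of the degree bound when only one ray has positive first coordinate --- but the underlying argument is identical.
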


\begin{proof}
By assumption, $\sigma \in \Sigma$ can be written as $\sigma = \sum_
{i = 0}^{t} r_i e_i$ with $r_i \in \bq$. Let $\fr(r) := r - \lfloor r
\rfloor$ denote the fractional part of $r$. Let $\lambda = \sum_{i = 0}
^{t} \fr(r_i)$. Whence, we can write $\sigma = \lambda + \sum_{i = 0}
^{t} \lfloor r_i \rfloor e_i.$ Consequently, $\sigma$ lies in the
$\bz_{\geq 0}$ span of $\lambda, e_0, \ldots, e_t$, which all have
first coordinate less than $\sum_{i=0}^{t} \pdeg_i$. Ergo, $\Sigma$ is
generated by elements whose first coordinate is less than
$\sum_{i = 0}^{t} \pdeg_i$.
\end{proof}

By Proposition ~\ref{prop:cone-generation}, in order to bound
the degree of generation of $R_D$, we only need bound
the degrees of extremal rays of an associated cone. We now carry out
this strategy.

\begin{lem} \label{lem:proj-generators}
Let $D = \sum_{i=0}^{n} \alpha_i D_i \in \di \bp^m \otimes_\bz \bq,$ where
$\deg D_i = \bida_i$, $\alpha_i = \frac{c_i}{k_i}\in \bq$, and
$\ell_i = \lcm_{j \neq i} (k_j)$. Then, $R_D$ is generated in degrees at most $\sum_{i=0}^n \ell_i \bida_i.$
\end{lem}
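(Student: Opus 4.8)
The plan is to apply Proposition~\ref{prop:cone-generation} to the cone of exponent vectors naturally associated to $R_D$, so that bounding the degrees of generators of $R_D$ reduces to bounding the first coordinates of the extremal rays of this cone. By Proposition~\ref{prop:pm-span-and-basis}, $R_D$ is spanned by functions $u^d \prod_{i=0}^n f_i^{c_i}$ subject to the degree-balancing condition $\sum_i c_i \bida_i = 0$ and the integrality condition $c_i \geq -\lfloor d\alpha_i \rfloor$. I would first replace the floor condition $c_i \geq -\lfloor d\alpha_i\rfloor$ with the weaker linear condition $c_i \geq -d\alpha_i$, which defines a rational polyhedral cone $\Sigma \subseteq \bz^{n+2}$ in the variables $(d, c_0, \ldots, c_n)$; this is precisely the setup of Proposition~\ref{prop:cone-generation} (with the single linear constraint $\sum_i c_i \bida_i = 0$ in place of the two constraints written there). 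The semigroup of lattice points satisfying the honest floor condition is contained in the semigroup of $\Sigma$, so a generation bound for $\Sigma$ gives a generation bound for $R_D$.

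\textbf{The key computation} is then to exhibit the extremal rays of $\Sigma$ and bound their first coordinates (the $d$-values). Here is where the hypothesis that $f_0, \ldots, f_m$ are independent linear forms, together with the structure $\ell_i = \lcm_{j\neq i}(k_j)$, should enter. I expect the extremal rays to be indexed essentially by the divisors $D_i$: the ray corresponding to index $i$ should have its ``pole'' concentrated at $D_i$ (so $c_i$ negative, the others nonnegative and chosen minimally to satisfy the balancing constraint), with $d$ chosen as the smallest positive integer making $d\alpha_i = dc_i/k_i$ an integer against the competing denominators — which is exactly what forces $d$ to be a multiple of $\ell_i$, and the degree-balancing against the $\bida_j$ contributes the factor $\bida_i$. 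I would verify directly that each such candidate ray lies in $\Sigma$, is extremal (using linear independence of the generating forms to see it cannot be written as a nonnegative combination of other elements), and has first coordinate $\ell_i \bida_i$. Summing over $i$ via Proposition~\ref{prop:cone-generation} then yields the bound $\sum_{i=0}^n \ell_i \bida_i$.

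\textbf{The main obstacle} I anticipate is twofold. First, the passage from the honest floor condition to the relaxed linear condition must be handled carefully: the two semigroups differ, and I must confirm that a generator of $R_D$ (an element not expressible as a product of lower-degree elements of $R_D$ itself) still corresponds to a lattice point whose $d$-coordinate is controlled by the relaxed cone — this requires checking that the canonical form of Proposition~\ref{prop:cone-generation}, when translated back through the floor condition, genuinely decomposes the corresponding function in $R_D$ as a product, rather than merely as a lattice sum. Second, and more substantively, correctly identifying the extremal rays and proving their extremality is the crux: one must pin down the minimal nonnegative values of the remaining $c_j$ forced by the single balancing equation $\sum_j c_j \bida_j = 0$, and show that the resulting $d = \ell_i \bida_i$ is optimal. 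I would approach this by a direct analysis of the one-dimensional faces of the polyhedron cut out by the inequalities $c_i \geq -d\alpha_i$ and the hyperplane $\sum_i c_i \bida_i = 0$, treating $d$ as the objective coordinate; once the extremal rays are correctly named, checking membership and the degree bound is a routine arithmetic verification using $\ell_i = \lcm_{j\neq i}(k_j)$.
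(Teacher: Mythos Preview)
Your approach is exactly the paper's: form the cone $\Sigma$, exhibit its extremal rays $e_0,\ldots,e_n$, and invoke Proposition~\ref{prop:cone-generation}. Two small corrections: first, for integer $c_i$ the conditions $c_i \geq -\lfloor d\alpha_i\rfloor$ and $c_i \geq -d\alpha_i$ are equivalent, so your first obstacle evaporates; second, your description of $e_i$ is inverted---the \emph{active} constraints are $c_j = -d\alpha_j$ for $j \neq i$ (forcing $k_j \mid d$ for each $j\neq i$, hence $\ell_i \mid d$), while the $i$-th coordinate absorbs the slack from $\sum_j c_j a_j = 0$ and contributes the factor $a_i$ needed for integrality, giving the paper's explicit ray
\[
e_i = \Bigl(\ell_i a_i,\; -\alpha_0\ell_i a_i,\; \ldots,\; \ell_i\textstyle\sum_{j\neq i}\alpha_j a_j,\; \ldots,\; -\alpha_n\ell_i a_i\Bigr).
\]
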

\begin{proof}
Let 
\begin{align}\label{eqn:Sigma-defn}
	\Sigma = \left \{(d, c_0, \ldots, c_n) \in \bz^{n+2} \colon c_i \geq - d
\alpha_i, \; 0 \leq i \leq n, \text{ and } \sum_{i=0}^{n} \ell_i \bida_i = 0
\right \}.
\end{align}

Observe that $\Sigma$ has extremal rays given by the lattice points 
\begin{equation}\label{eqn:e-i-proj}
	e_i = \left(\ell_i \bida_i, - \alpha_0 \ell_i \bida_i, \ldots
-\alpha_{i-1} \ell_i \bida_i, \ell_i \sum_{j\ne i} \alpha_j \bida_j,
-\alpha_{i+1} \ell_i \bida_i, \ldots, -\alpha_n, \ell_i \bida_i \right)
\end{equation}
for each $i\in \{0, \ldots n\}$.
Therefore, applying Proposition ~\ref{prop:cone-generation}, we see $R_D$ is generated in degrees less than
\[
	\sum_{i=0}^n \ell_i \bida_i.
\qedhere
\]\end{proof}

Let $w_1, \ldots w_r$ be the generators in degrees at most $\sum_{i=0}^n \ell_i
\bida_i$ (given by Lemma ~\ref{lem:proj-generators}), and let 
$\phi \colon \bk[w_1, \ldots w_r] \to R_D$ be the natural surjection.
For the remainder of the section, we aim to bound the degree of relations
of $R_D$, or equivalently, the degree of generation of $\ker \phi$.
We can factor $\phi$ through the
semigroup ring 

\[
	\bk[\Sigma] =  \langle u^d z_0^{c_0} \cdots z_n^{c_n} \colon c_i \in
\bz, \; c_i \geq -d \alpha_i, \mbox{ and }\sum_{i=0}^{n} \bida_i c_i =
\sum_{i=0}^{n} \bidb_i c_i \rangle. 
\]
by
\begin{equation}
\label{eqn:factor-through-semigroup-ring}
\begin{tikzcd}[baseline=(current  bounding  box.center), row sep = tiny]
\bk[w_1,\ldots, w_r] \ar {r}{\chi} & \bk[\Sigma] \ar {r}{\psi} & R_D \\
w_i \ar[mapsto]{r} & u^{d_i}z_0^{c_{i0}} \cdots z_n^{c_{in}} \ar[mapsto]{r} & u^{d_i}f_0^{c_{i0}} \cdots f_n^{c_{in}}.
\end{tikzcd}
\end{equation}

\sssec*{Bounding the relations}
\label{sssec:relations-projective}

We now move on to bounding the degree of relations of a section ring $R_D$ for $D$ a divisor on $\bp^m$.
In Lemma ~\ref{lem:composite-map} we show that the degree of the
generators for $\ker \phi$, which is the same as the degree of relations
of $R_D$, is bounded by the maximum of the degree of generators for
$\ker \chi$ and for $\ker \psi$.
In Lemma ~\ref{lem:bound-ker-chi}, we bound the degree of generation of
$\ker \chi $, and we bound the degree of generation of $\ker \psi$ in Lemma ~\ref{lem:proj-relations-psi}

\begin{lem}
\label{lem:composite-map}
Let $X$ be a $\bk$ variety and let $D = \sum_{i=0}^{n}\alpha_i D_i\in \di X \otimes_\bz \bq$ where $D_i = V(f_i)$. Suppose we have a
surjection $\phi\colon \bk[w_1,\ldots, w_r] \rightarrow R_D$ given by $w_i
\mapsto p_i(f_0, \ldots, f_n),$ where $p_i$ is a monomial in $f_0,\ldots, f_n$.
Let $\bida_0, \ldots, \bida_n, \bidb_0, \ldots, \bidb_n \in \bz_{\geq 0}.$
Then, define
\begin{align*}
	\Sigma = \langle u^d z_0^{c_0} \cdots z_n^{c_n} : c_i \geq -d \alpha_i, \sum_{i=0}^{n} \bida_i c_i = \sum_{i=0}^{n} \bidb_i c_i = 0 \rangle. 
\end{align*}
In this case, we can factor $\phi$ as a composition of $\chi$ and $\psi$ defined by
\[
\begin{tikzcd}[row sep = tiny]
\bk[w_1,\ldots, w_r] \ar {r}{\chi} & \bk[\Sigma] \ar {r}{\psi} & R_D \\
w_i \arrow[mapsto]{r} & u^{d_i}z_0^{c_{i0}} \cdots z_n^{c_{in}} \arrow[mapsto]{r} & u^{d_i}f_0^{c_{i0}} \cdots f_n^{c_{in}}.
\end{tikzcd}
\]
Assuming $\chi$ is surjective, the minimal degree of generation of $\ker \phi$
is at most the maximum of the minimal degree of generation of $\ker \chi$ and
the minimal degree of generation of $\ker \psi$.
\end{lem}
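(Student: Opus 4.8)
The plan is to exploit the factorization $\phi = \psi \circ \chi$ together with the surjectivity of $\chi$ to produce a short exact sequence of graded modules relating the three kernels, and then to invoke the standard fact that in such a sequence the degree of generation of the middle term is controlled by those of the outer two. First I would fix the gradings: grade $\bk[w_1, \ldots, w_r]$ by $\deg w_i = d_i$, grade $\bk[\Sigma]$ by its first ($u$-)coordinate, and use the standard grading on $R_D$, so that $\chi$, $\psi$, and $\phi$ are all homogeneous. Then $\ker \chi$ and $\ker \phi$ are homogeneous ideals of $\bk[w_1,\ldots,w_r]$ and $\ker\psi$ is a homogeneous ideal of $\bk[\Sigma]$; in particular each is a graded module. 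Since $\phi = \psi \circ \chi$, we have $\ker \chi \subseteq \ker \phi$ and $\ker \phi = \chi^{-1}(\ker \psi)$.

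Next I would build the short exact sequence. Restricting $\chi$ to $\ker \phi$ gives a homogeneous map of $\bk[w_1,\ldots,w_r]$-modules $\ker \phi \to \ker \psi$, where $\ker \psi$ is regarded as a $\bk[w_1,\ldots,w_r]$-module via $\chi$. Because $\chi$ is surjective and $\ker \psi \subseteq \bk[\Sigma] = \im \chi$, this restriction maps onto $\ker\psi$, and its kernel is precisely $\ker \chi$. This yields a short exact sequence of graded modules
\[
	0 \to \ker \chi \to \ker \phi \xrightarrow{\chi} \ker \psi \to 0.
\]
I would also record that the degree of generation of $\ker \psi$ as a $\bk[w_1,\ldots,w_r]$-module (via $\chi$) equals its degree of generation as an ideal of $\bk[\Sigma]$: any set of $\bk[\Sigma]$-generators remains a generating set after pulling the module structure back along the surjection $\chi$, with no change of degrees.

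Finally I would apply the elementary graded lemma: in a short exact sequence $0 \to A \to B \to C \to 0$ of graded modules, if $A$ is generated in degrees at most $d_A$ and $C$ in degrees at most $d_C$, then $B$ is generated in degrees at most $\max(d_A, d_C)$. The argument is routine: lift homogeneous generators of $C$ to $B$ and adjoin homogeneous generators of $A$; given any homogeneous $b \in B$, subtracting the appropriate combination of the lifts lands in $A = \ker(B \to C)$, which is then a combination of the generators of $A$. Applying this to the sequence above shows $\ker \phi$ is generated in degrees at most the maximum of the generation degrees of $\ker \chi$ and $\ker \psi$, which is the assertion (recalling that generators of $\ker\phi$ are exactly the relations of $R_D$).

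I expect the only delicate points — and the ones I would state most carefully — to be the surjectivity of the restricted map $\chi\colon \ker \phi \to \ker \psi$ and the identification of the generation degree of $\ker \psi$ across the two ring structures; both rely essentially on the hypothesis that $\chi$ is surjective, and the conclusion fails without it. The graded middle-term lemma itself is standard and would be invoked without a lengthy proof.
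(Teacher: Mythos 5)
Your proof is correct and follows essentially the same route as the paper's: the surjectivity of $\chi$ yields the short exact sequence $0 \to \ker \chi \to \ker \phi \to \ker \psi \to 0$, and generators of $\ker\phi$ are obtained as lifts of generators of $\ker\psi$ together with generators of $\ker\chi$. Your write-up simply makes explicit the gradings, the module structure on $\ker\psi$ via $\chi$, and the middle-term generation lemma that the paper invokes implicitly.
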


\begin{proof}
First, surjectivity of $\chi$ implies we have an exact sequence
\[
\begin{tikzcd}
0 \ar {r} & \ker \chi \ar{r} & \ker \phi \ar {r} & \ker \psi \ar r & 0.
\end{tikzcd}
\]
This shows that lifts of generators of $\ker \psi$ together with images of
generators of $\ker \chi$ generate all of $\ker \phi,$ as desired. 
\end{proof}

\begin{lem}
\label{lem:bound-ker-chi}
Retaining the notation of Lemma ~\ref{lem:composite-map}, if $\Sigma$ has
extremal rays $e_0,\ldots, e_t$ in degrees $d_0, \ldots, d_t$ then $\ker \chi$
is generated in degrees at most $2(\sum_{i=0}^{t}d_i-1)$.
\end{lem}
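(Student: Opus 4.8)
The plan is to analyze $\ker\chi$ as the toric ideal of the affine semigroup $\Sigma$ and to bound the degrees of its binomial generators by reducing every monomial of large degree to a fixed canonical form. Since $\chi$ sends each variable $w_i$ to a monomial $u^{d_i} z_0^{c_{i0}} \cdots z_n^{c_{in}}$, the kernel $\ker\chi$ is generated by binomials $m_1 - m_2$ with $\chi(m_1) = \chi(m_2)$; writing $N := \sum_{i=0}^t d_i - 1$, Proposition~\ref{prop:cone-generation} tells us both that every semigroup generator of $\Sigma$ has first coordinate (degree) at most $N$ and that every $\sigma \in \Sigma$ has a canonical expression $\sigma = \lambda + \sum_{i=0}^t \zeta_i e_i$ with $\zeta_i \in \bz_{\ge 0}$ and $\deg \lambda \le N$. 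It therefore suffices to show that any binomial relation whose common degree exceeds $2N$ lies in the ideal generated by relations of strictly smaller degree; descending induction on degree then forces every minimal generator of $\ker\chi$ into degree at most $2N$.

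Fix, for each $\sigma$, a \emph{canonical monomial} $M(\sigma) := m_\lambda \prod_{i} W_i^{\zeta_i}$ obtained from the decomposition above, where $W_i$ is a fixed monomial with $\chi(W_i) = e_i$ and $m_\lambda$ is a fixed monomial with $\chi(m_\lambda) = \lambda$. I would prove by induction on $\deg \sigma$ that every monomial $m$ with $\chi(m) = \sigma$ is congruent to $M(\sigma)$ modulo the ideal $J$ generated by binomials of degree at most $2N$. When $\deg\sigma \le 2N$ this is immediate, since $m - M(\sigma)$ is itself such a binomial. When $\deg\sigma > 2N$, the inequality $\sum_i \zeta_i d_i = \deg\sigma - \deg\lambda > 2N - N = N \ge 0$ forces some $\zeta_j \ge 1$, so that $\sigma - e_j \in \Sigma$ and $M(\sigma) = W_j \cdot M(\sigma - e_j)$. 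The inductive step is then to \emph{peel off} the factor $W_j$: exhibit a single binomial of degree at most $2N$ rewriting $m$ as $W_j \cdot m'$ modulo $J$, for some $m'$ with $\chi(m') = \sigma - e_j$, after which the inductive hypothesis reduces $m'$ to $M(\sigma - e_j)$ and hence $m$ to $M(\sigma)$. Granting this, any relation $m_1 - m_2 = (m_1 - M(\sigma)) - (m_2 - M(\sigma))$ lies in $J$, giving $\ker\chi = J$ as desired.

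The main obstacle is the peeling step, which is the genuine combinatorial content and is where the structure of $\Sigma$ as the full set of lattice points of a rational polyhedral cone (equivalently, its saturation) must be used. Concretely, I must show that the multiset of generators appearing in $m$ admits a sub-product transformable, using only relations of degree at most $2N$, into $W_j$ times lower-degree generators; the factor of two enters precisely because such a local move rewrites a product of generators spanning at most two extremal directions, whose degree is bounded by $2N$. Saturation is essential here: without it one could have irreducible primitive relations of arbitrarily high degree — for instance $g^{3} - h^{4}$ supported on a single ray (where $3g = 4h$ forces $g$ and $h$ to be parallel), which for suitable degrees would be a minimal generator violating the bound — whereas saturation guarantees the intermediate lattice points needed to split such products and reduces the behaviour along each ray to its primitive generator.

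I expect this peeling argument to be a direct abstraction of the explicit reduction carried out for a single hyperplane in Theorem~\ref{thm:proj-one-point}, where the relations $G^{\vec v, \vec w}_{i,j}$ collapse index gaps larger than one and the relations $L^{\vec v, \vec w}_{i,j}$ sort adjacent indices, both of degree at most twice the generator degree. Thus the remaining task is to carry out the same sorting-to-canonical-form procedure for the general semigroup $\Sigma$ and to verify at each move that the exchanged sub-product has degree at most $2N = 2(\sum_{i=0}^t d_i - 1)$, with the one-dimensional cone ($t=0$, where $\ker\chi = 0$) handled trivially.
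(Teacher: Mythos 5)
Your overall frame---treat $\ker\chi$ as a toric ideal, invoke Proposition~\ref{prop:cone-generation} for the canonical form $\sigma=\lambda+\sum_i\zeta_ie_i$ with $\deg\lambda\le N:=\sum_{i=0}^td_i-1$, and reduce every monomial to a canonical one modulo relations of degree at most $2N$---is the same as the paper's. But your argument is not complete: the ``peeling'' step, which you yourself flag as ``the main obstacle'' and ``the remaining task,'' is precisely the content of the lemma, and nothing in your proposal supplies it. As you have set things up, to peel $W_j$ off a monomial $m$ with $\chi(m)=\sigma$ and $\deg\sigma>2N$ you must find, inside the multiset of factors of $m$, a sub-product of degree at most $2N$ that can be rewritten so as to expose the factor $W_j$; whether such a sub-product exists is exactly the question of whether low-degree relations generate, so the key step of your induction presupposes what is being proved. (A secondary issue: when the $e_i$ are not linearly independent the decomposition of Proposition~\ref{prop:cone-generation} is not unique, so your identity $M(\sigma)=W_j\cdot M(\sigma-e_j)$ requires choosing the decompositions for $\sigma$ and $\sigma-e_j$ coherently, which you do not address.)

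The paper closes this gap with a concrete device your proposal lacks, and the local move is a \emph{merge}, not a peel. By Proposition~\ref{prop:cone-generation}, every semigroup generator of $\Sigma$ is either an extremal element $e_i$ or one of the finitely many ``fractional'' elements $\lambda_0=0,\lambda_1,\ldots,\lambda_r$, where $\lambda_j=\sum_is_ie_i$ with $0\le s_i<1$, so that $\deg\lambda_j\le\sum_id_i-1=N$. The paper takes as relations, for each pair $(j,k)$, the rewriting of $\lambda_j+\lambda_k$ into canonical form $\lambda_l+\sum_i\zeta_ie_i$; each such binomial has degree $\deg\lambda_j+\deg\lambda_k\le 2N$. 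Reduction to canonical form then proceeds by merging: in any monomial in the generators, as long as two fractional factors $m_{\lambda_j}m_{\lambda_k}$ appear, apply the corresponding relation; each application strictly decreases the number of fractional factors, so the process terminates in a monomial of the form $m_\lambda\prod_iW_i^{c_i}$, and one never has to search for a hidden $W_j$ inside a high-degree monomial. If you want to salvage your induction, replace the peeling move with this merging move; that is the missing idea. (Your side remark that saturation of $\Sigma$ is essential is correct and consistent with the paper, since saturation is what makes the fractional elements $\lambda_j$ lie in $\Sigma$ at all.)
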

\begin{proof}
Since $e_0, \ldots, e_t$ are extremal rays, Proposition
~\ref{prop:cone-generation} implies every element $\sigma \in \Sigma$ can be written
in a canonical form $\lambda + \sum_{i=0}^{t} \zeta_i e_i$ where all $\zeta_i \in \mathbb{Z}_{\ge 0}$.  Let
$\lambda_0 :=
0,\lambda_1, \ldots, \lambda_r$ be all elements of $\Sigma$ which
can be
written in the form $\lambda_j = \sum_{i=0}^{t}s_i e_i$ with $0
\leq s_i < 1.$ Then, for any $1 \leq j \leq k \leq r,$ we can write $\lambda_j + \lambda_k$ in
the above canonical form, yielding a (possibly trivial) relation in degree at most $\deg \lambda_j
+ \deg \lambda_k\leq 2 \cdot \left( \sum_{i=0}^{t}d_i -1 \right).$
Furthermore, these relations generate all relations, as one can apply a
sequence of these relations to put any $\sigma \in \Sigma$ into canonical form
$\sigma = \lambda + \sum_{i=0}^{t}\zeta_i e_i$ from Proposition
~\ref{prop:cone-generation}.
\end{proof}
\begin{lem}
\label{lem:proj-relations-psi}
Let $D = \sum_{i=0}^{n} \alpha_i D_i \in \di \bp^m \otimes_\bz \bq$, where
$\deg D_i = \bida_i$, $\alpha_i = \frac{c_i}{k_i}\in \bq$, and
$\ell_i = \lcm_{j \neq i} (k_j)$. Define $\Sigma$ as in Equation
~\eqref{eqn:Sigma-defn} and $\psi$ as in Equation
~\eqref{eqn:factor-through-semigroup-ring}. Then, $\ker \psi$ is
generated in degrees at most
\begin{align}
\label{eqn:proj-relation-degree}
	\frac{\max_{0 \le i \le n}(\bida_i)}{\deg(D)} +  \sum_{i=0}^n \ell_i a_i.
\end{align}

\end{lem}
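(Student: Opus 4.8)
The plan is to describe $\ker\psi$ explicitly and then read off the degree bound from the cone geometry of Proposition~\ref{prop:cone-generation}. Using assumption~\eqref{eqn:pm-basis-assumption} I would fix coordinates so that $f_0,\dots,f_m$ are the variables, whence each $f_j$ with $j>m$ is a fixed homogeneous polynomial $f_j=P_j(f_0,\dots,f_m)=\sum_{\vec v}\beta_{\vec v}\,x^{\vec v}$ of degree $a_j$. Since $\psi$ simply substitutes $z_i\mapsto f_i$, I expect $\ker\psi$ to be generated by the \emph{substitution relations}: for each $j>m$ and each lattice point $(d,\vec c)\in\Sigma$ that is \emph{reducible at $j$} (meaning $c_j\ge-\lfloor d\alpha_j\rfloor+1$), the element
\[
	R_{\vec c,j}:=z^{\vec c}-\sum_{\vec v}\beta_{\vec v}\,z^{\vec c-\mathbf 1_j+\vec v},
\]
where $\mathbf 1_j$ is the $j$-th unit vector and $\vec v$ runs over exponents of $P_j$. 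I would first verify that every monomial $z^{\vec c-\mathbf 1_j+\vec v}$ appearing here lies in $\bk[\Sigma]$ (reducibility handles the $j$-th coordinate and $v_l\ge 0$ handles the rest), and then prove the $R_{\vec c,j}$ generate $\ker\psi$: applying a substitution lowers the excess $\sum_{l>m}\bigl(c_l+\lfloor d\alpha_l\rfloor\bigr)$ by one, so iterating carries any monomial to a $\bk$-combination of the normal-form monomials of Proposition~\ref{prop:pm-span-and-basis} (those with $c_l=-\lfloor d\alpha_l\rfloor$ for $l>m$); since the latter map under $\psi$ to a basis of $R_D$, every element of $\ker\psi$ reduces to $0$ and hence lies in the ideal generated by the $R_{\vec c,j}$.

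To bound degrees it then suffices to bound the degree $d$ of a \emph{minimal} reducible point $(d,\vec c)$, since $R_{\vec c,j}=z^{\mu}R_{\vec c-\mu,j}$ whenever $\mu\in\Sigma$ and $\vec c-\mu$ is still reducible at $j$. Writing $(d,\vec c)=\lambda+\sum_i\zeta_i e_i$ in the canonical form of Proposition~\ref{prop:cone-generation}, with $\lambda=\sum_i s_ie_i$ and $0\le s_i<1$ and the extremal rays $e_i$ of~\eqref{eqn:e-i-proj}, a direct computation gives $c_j+d\alpha_j=(s_j+\zeta_j)\,\ell_j\deg D$, because $e_i$ contributes $0$ to $c_j+d\alpha_j$ for $i\ne j$. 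Subtracting such an $e_i$ therefore leaves $c_j+d\alpha_j$ fixed, and in fact leaves the integer $c_j+\lfloor d\alpha_j\rfloor$ fixed as well, using the integrality $\ell_i a_i\alpha_j\in\bz$ for $i\ne j$ (which holds since $k_j\mid\ell_i$). Thus peeling $e_i$ preserves reducibility at $j$, and because $(d,\vec c)-e_i\in\Sigma$ exactly when $\zeta_i\ge 1$, minimality forces $\zeta_i=0$ for all $i\ne j$.

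It remains to control the $e_j$-component. Setting $t_j:=s_j+\zeta_j$, the degree is $d=\sum_{i\ne j}s_i\ell_i a_i+t_j\ell_j a_j$ and $c_j+d\alpha_j=t_j\ell_j\deg D$. If $t_j<1$ then $\zeta_j=0$ and $d=\deg\lambda<\sum_i\ell_i a_i$. If $t_j\ge 1$ then $(d,\vec c)-e_j\in\Sigma$, so minimality means it is \emph{not} reducible at $j$, i.e.\ it is normal there; this forces $c_j+d\alpha_j-\ell_j\deg D=\fr\bigl((d-\ell_j a_j)\alpha_j\bigr)<1$, whence $(t_j-1)\ell_j\deg D<1$ and $(t_j-1)\ell_j a_j<\tfrac{a_j}{\deg D}$. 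Combining with $s_i<1$ gives
\[
	d<\sum_{i\ne j}\ell_i a_i+\ell_j a_j+\frac{a_j}{\deg D}
	=\sum_{i=0}^n\ell_i a_i+\frac{a_j}{\deg D}
	\le\sum_{i=0}^n\ell_i a_i+\frac{\max_{0\le i\le n}(a_i)}{\deg D},
\]
which is the claimed bound.

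The step I expect to be genuinely delicate is proving that the substitution relations generate $\ker\psi$: since $P_j$ is a sum rather than a monomial, $\ker\psi$ is not a binomial (lattice) ideal, so I cannot quote toric machinery and must instead run the rewriting argument above by hand, checking that the excess statistic makes reductions terminate and that they land among the linearly independent normal-form monomials supplied by Proposition~\ref{prop:pm-span-and-basis}. By contrast, the degree estimate is a clean consequence of the cone bookkeeping, its only subtlety being the integrality facts $\ell_i a_i\alpha_j\in\bz$ that make ``peeling $e_i$ preserves reducibility'' hold on the nose.
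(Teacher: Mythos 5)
Your proof is correct and is essentially the paper's argument: the paper likewise generates $\ker \psi$ by the substitution relations $u^d(z_i - \beta_i)\prod_j z_j^{c_j}$ (with $\beta_i$ expressing $f_i$ as a polynomial in $f_0, \dots, f_m$, justified by Proposition~\ref{prop:pm-span-and-basis}) and then bounds their degrees by factoring out the extremal rays $e_j$ of~\eqref{eqn:e-i-proj}. The only difference is bookkeeping: the paper indexes a relation by its exponent vector in the affine slice $\delta_i + \newspan_{\bq_{\geq 0}}(e_0, \dots, e_n)$, where peeling off an $e_j$ is immediate, whereas you index by the leading monomial inside $\Sigma$ and verify via the integrality $\ell_i a_i \alpha_j \in \bz$ that peeling preserves reducibility; both routes give the identical bound, and your write-up of the generation step is a more detailed version of what the paper asserts in one line.
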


\begin{proof}
We claim there exist $\beta_0, \ldots, \beta_n \in \bk[\Sigma]$
such that $\ker \psi$ is generated by
\begin{equation}
\label{eqn:relations-psi-proj}
	u^d(z_i - \beta_i)\prod_{j=0}^n {z_j}^{c_{j}}
\end{equation}

\noindent
for all $d \in \mathbb{N}$ and $c_i \ge -\alpha_i d$ satisfying
$\bida_i + \sum_{j = 0}^n \bida_j c_j = 0$.

Indeed, define the $\beta_i$ as a polynomial in $z_0, \ldots, z_m$ such that $\psi(\beta_i) = \psi(z_i) = f_i \in R_D.$ This is possible by Proposition ~\ref{prop:pm-span-and-basis}.
Furthermore, the relations given in Equation
~\eqref{eqn:relations-psi-proj} generate all relations, since they
allow us to reduce any $u^d \prod_{j = 0}^n z_j^{c_i}$ to a canonical
form, with $c_i = -\lfloor  d \alpha_i\rfloor$ whenever $i  > m$.

For the remainder of the proof, fix $i \in \{0,\ldots, n\}$. To
complete the proof, it suffices to bound the degree of generation
of the relations of the form $u^d(z_i - \beta_i) \prod_{j = 0}^n
z_j^{c_j}$, by Equation ~\eqref{eqn:proj-relation-degree}. For
a given monomial
$u^d(z_i - \beta_i)\prod_{j=0}^n z_j^{c_j} \in \bk[\Sigma]$, we associate it with the corresponding element $(d, c_0,\ldots, c_n) \in\Sigma$. Let $\Sigma_i \subseteq \bz^{n + 2}$
be the set of points of the form $(d, c_0, \ldots, c_n)$ satisfying
$c_j \ge -d \alpha_j$ for all $j$ and $\sum_{j=0}^n c_j a_j = -a_i$.
Let
\begin{align*}
	\delta_i := \left(\frac{a_i}{\deg(D)}, -\frac{\alpha_0 a_0}{\deg(D)},
	\ldots, - \frac{\alpha_n a_n}{\deg(D)} \right).
\end{align*}

\noindent
Then we see $\Sigma_i = \{\sigma \in \Sigma | \sigma - \delta_i \in \newspan_{\bq_{\geq 0}}
(e_0, \ldots, e_n)\}$ with $e_i$ as defined in Equation
~\ref{eqn:e-i-proj}. Therefore, we can write any element of
$\Sigma_i$ uniquely as
\[
	\delta_i + \sum_{j=0}^n c_j e_j
\]
where $c_j \in \mathbb{R}$ for each $j$.

Whenever some there is some $j$ for which $c_j \ge 1$, we can write the relation $u^d(z_i -
\beta_i)\prod_{j=0}^n z_j^{c_j} = e_j h,$ for some
$h \in \Sigma_i$. Therefore, for a fixed $i$, relations of the form
$u^d(z_i - \beta_i)\prod_{j=0}^n z_j^{c_j} \in \bk[\Sigma]$ are
generated by those in degrees less than
\[
	\frac{\bida_i}{\deg(D)} + \sum_{i=0}^n \ell_i a_i,
\]
as $\deg \delta_i = \frac{a_i}{\deg(D)}$. Hence,
$\ker \psi$ is generated in degrees less than
\[
	\frac{\max_{0 \leq i \leq n} \bida_i}{\deg(D)} + \sum_{i=0}^n \ell_i a_i.
\]
\end{proof}

\sssec*{Proving Theorem ~\ref{thm:proj-generators-relations}}
\label{sssec:proving-theorem-proj}
By combining the above results, we get our main theorem bounding
the generator and relation degrees of the section ring of any
$\bq$-divisor on projective space.


\begin{proof}[Proof of Theorem~\ref{thm:proj-generators-relations}]
The bound on degree of generation is precisely the content of Lemma 
~\ref{lem:proj-generators}. It only remains to bound the degree of 
relations.

By ~\ref{lem:bound-ker-chi}, $\ker \chi$ is generated in degrees 
at most $2\sum_{i=0}^n \ell_i a_i$ and by Lemma
~\ref{lem:proj-relations-psi}, $\ker \psi,$ is generated in
degrees up to $\frac{\max_{0 \leq i \leq n} \bida_i}{\deg(D)} + \sum_{i=0}^n \ell_i a_i$. 
Consequently, Lemma ~\ref{lem:composite-map} implies that $\ker \phi$
is generated in degrees less than
\[
	\max \left(2 \sum_{i=0}^n \ell_i a_i, \frac{\max_{0 \le i \le n}
	(\bida_i)}{\deg(D)} + \sum_{i=0}^n \ell_i a_i \right).
	\qedhere
\]
\end{proof}

\begin{rem}
\label{rem:exact-noneff-bounds}
The bounds given in Theorem 
~\ref{thm:proj-generators-relations} are asymptotically
tight to within a factor of two 
for the following class of divisors. Consider
a divisor 
$ D = \sum_{i=0}^n\frac{p_i}{2q_i}H_i \in \di \bp^m \otimes_\bz \bq$
such that $H_i$ are hyperplanes,  
$q_i$ are pairwise coprime integers, and $p_i$ are chosen
so that $\deg D = \frac{1}{2 \prod_{i=0}^n q_i}$. Further,
choose a linear subspace $\pi: \bp^1 \rightarrow \bp^m$  generically so that
$\pi^* D = \sum_{i=0}^n \frac{p_i}{2q_i}P_i$, 
where $P_i$ are distinct points in $\bp^1$. To choose such a map
$\pi$, we may need to assume that the base field is infinite.
By
\cite[Remark, p. 9]{dorney:canonical},
the given bounds on the generators and relations of
$R_{\pi^*D}$ are within
a factor of two of the degree of generation and relations of
$R_{\pi^*D}$. Finally, since restriction map
$R_D \rightarrow R_{\pi^*D}$ induced by the restriction maps
on cohomology $H^0(\bp^m, dD) \rightarrow H^0(\bp^1, \pi^*(dD))
\cong H^0(\bp^1, d\pi^* D)$ are surjective, we obtain that
the bounds for the generators and relations of $R_D$ given in
Theorem ~\ref{thm:proj-generators-relations} also
agree with the degree of generation and relations 
to within a factor of two.
\end{rem}

\section{Section Rings of Hirzebruch Surfaces}
\label{sec:hirz}
\invisiblesubsection
Let $\hirz_m$ denote the $m$-th Hirzebruch surface.
The aim of this section is to prove Theorem
~\ref{thm:hirz-generators-relations}, which bounds the degree of
generators and relations of the section ring of any $\bq$-divisor
on $X = \hirz_m$, for all $m \geq 0$.

One way to describe the Hirzebruch surface $\hirz_m$ is as a
quotient,
\[
\hirz_m \cong (\ba^2 \setminus \{0\}) \times (\ba^2 \setminus \{0\})/\mathbb G_m \times \mathbb G_m
\]
where $\mathbb G_m$ is the multiplicative subgroup of $\ba^1$, and the action of $\mathbb G_m \times \mathbb G_m$ is given by
$(\lambda, \mu) \cdot (u\colon v; z\colon w) \mapsto (\lambda u\colon \lambda v; \mu z\colon \lambda^{-m} \mu w),$ as described in \cite[p.~ 6]{zhao:counting-cubic}. Hence, one can think of
$\hirz_m$ as a $\bp^1$ bundle where $u,v$ are the coordinates on
$\bp^1$ and $z,w$ are the coordinates on the fiber.
Sections of a line bundle $\mathscr L$ on $\hirz_m$ can be written as
rational functions in $z, w, u, v$.
Furthermore we define the bi-degree of a monomial $u^a v^{b} z^c w^d$
on $\hirz_n$ to be
$(a + b + mc, c + d)$. Rational sections of $\hirz_m$ can be written as
rational functions with numerators and denominators of the same bi-degrees. To see this, 
observe
$\Pic(\hirz_m) \cong \bz \times \bz$, where the class of a line bundle
in $\Pic(\hirz_m)$
is determined by its bi-degree, as follows from the excision exact sequence
for class groups.

Furthermore, we will restrict to the case that $D$ is a divisor
for which both of its bi-degrees are positive. We now justify this
restriction. If either of the bi-degrees of $D$ are negative,
then the section ring is concentrated in degree 0. If one of the
bi-degrees is 0, say the first one is 0, then $R_D$ is isomorphic
to $R_{D'}$, where $D' \in \di \bp^1 \otimes_\bz \bq$, 
where $D'$ can be written as a sum of divisors whose degrees
are multiples of the second bi-degree of $D$. Since the case of
$\bp^1$ has already been analyzed in \cite{dorney:canonical}, we
are justified in assuming that both bi-degrees of $D$ are positive.

For the remainder of this section we will assume $D_1, D_2, D_3,$ and $D_4$ are
distinct divisors with bi-degrees $(1,0)$
, $(1,0)$, $(0,1)$, and $(0,1)$ respectively with $D_i = V(f_i)$ for $1 \leq i \leq 4$ with $f_i \in \sco(\bida_i, \bidb_i)$.  
In order to achieve the above condition on the bi-degrees of
$D_1, \ldots D_4$, it may be necessary to add in ``ghost divisors'' (i.e.
divisors with of the desired form with a coefficient $0$).
Also, $f_1$ and $f_2$ are independent linear
polynomials in $u$ and $v$ and $f_3$ and $f_4$ are independent
linear polynomials in $z$ and $w$.
Analogously to Proposition ~\ref{prop:pm-span-and-basis} for the case
of $\bp^m$, all rational functions on $\hirz_m$
can be written uniquely in a form where their numerator is a function
of only $f_1,f_2,f_3$, and $f_4$.

\begin{defn}
\label{defn:t-defn}
Define 
\begin{equation*}
	\Te(D) = \left\{i \in \{1, \ldots, n\}: \bida_i \sum_{k=1}^n \bidb_k 
\alpha_k = \bidb_i \sum_{k=1}^n \bida_k \alpha_k \right\},
\end{equation*}

\begin{equation*}
	\Tp(D) = \left\{ i \in \{1, \ldots, n\}\colon  \vphantom{\sum_{k = 1}^n} 
	\bida_i \sum_{k = 1}^n \alpha_k \bidb_k > \bidb_i \sum_{k = 1}^n \alpha_k \bida_k 
\right\},
\end{equation*}

\noindent
and
\begin{equation*}
	\Tm(D) = \left\{ i \in \{1, \ldots, n\}\colon \bida_i \sum_{k = 1}^n \alpha_k
	\bidb_k < \bidb_i \sum_{k=1}^n \alpha_k \bida_k \right\}.
\end{equation*}
\end{defn}

\begin{lem}
\label{lem:hirz-generators}
For $D = \sum_{i=1}^{n} \frac{c_i}{k_i}D_i \in \di \hirz_m \otimes_\bz \bq$, with $\deg D_i = (\bida_i, \bidb_i), \ell_i = \lcm_{j\neq i} (k_j), \ell_{i,j} = \lcm_{h \neq i,j} (k_h).$ Then, the section ring $R_D$ is generated in degrees at most
\begin{equation}\label{eqn:def-sigma}
	\rho := \sum_{i \in \Te(D)} \gcd(\bida_i, \bidb_i) \ell_i + \sum_{\substack
	{i \in \Tp(D) \\ j\in \Tm(D)}} (\bida_i \bidb_j - \bida_j \bidb_i)
	\ell	_{i,j}.
\end{equation}
\end{lem}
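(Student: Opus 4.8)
The plan is to reduce the statement to a purely combinatorial problem about a rational polyhedral cone and then invoke Proposition~\ref{prop:cone-generation}, exactly as in the proof of Lemma~\ref{lem:proj-generators}, but now with the two bidegree constraints coming from $\Pic(\hirz_m) \cong \bz^2$ in place of the single degree constraint on $\bp^m$. Concretely, by the Hirzebruch analogue of Proposition~\ref{prop:pm-span-and-basis} the ring $R_D$ is spanned by the monomials $u^d\prod_{i=1}^n f_i^{c_i}$ whose exponent vectors lie in
\begin{equation*}
	\Sigma := \left\{ (d, c_1, \ldots, c_n) \in \bz^{n+1} \colon c_i \geq -d\alpha_i,\ \sum_{i=1}^n \bida_i c_i = \sum_{i=1}^n \bidb_i c_i = 0 \right\},
\end{equation*}
since such a monomial represents a rational section precisely when its numerator and denominator share both bidegrees. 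Because the surjection $\bk[\Sigma] \to R_D$ preserves the grading by $d$, the degree of generation of $R_D$ is at most the degree of generation of the semigroup $\Sigma$; so by Proposition~\ref{prop:cone-generation} it suffices to exhibit lattice points on the extremal rays of $\Sigma$ that span $\Sigma$ over $\bq_{\geq 0}$ and to bound the first coordinate of each.

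First I would pass to the defect coordinates $\epsilon_i := c_i + d\alpha_i \geq 0$. Writing $S_a := \sum_k \alpha_k \bida_k$ and $S_b := \sum_k \alpha_k \bidb_k$ for the two (positive) bidegrees of $D$, the equality constraints become $\sum_i \epsilon_i \bida_i = d\,S_a$ and $\sum_i \epsilon_i \bidb_i = d\,S_b$. Thus a point of $\Sigma$ is the same datum as a nonnegative vector $\epsilon$ expressing the planar direction $(S_a, S_b)$ as a combination $\sum_i \epsilon_i(\bida_i, \bidb_i)$. Sorting the vectors $(\bida_i,\bidb_i)$ by slope, the index $i$ lies in $\Te(D)$, $\Tp(D)$, or $\Tm(D)$ according to whether $(\bida_i, \bidb_i)$ is parallel to, steeper than, or shallower than $(S_a, S_b)$, which is exactly Definition~\ref{defn:t-defn}. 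By two-dimensional Carath\'eodory, any such nonnegative representation of the ray through $(S_a, S_b)$ decomposes into elementary pieces, each supported either on a single index of $\Te(D)$ or on a pair $(i,j) \in \Tp(D) \times \Tm(D)$ lying on opposite sides of the ray; these elementary pieces are the extremal rays of $\Sigma$.

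Next I would compute the first coordinate of the primitive generator on each ray. For $i \in \Te(D)$, setting all other defects to zero forces $c_h = -d\alpha_h$ for $h \neq i$ (so $\ell_i \mid d$) and $c_i = d\sum_{h\neq i}\alpha_h \bida_h/\bida_i = d\sum_{h\neq i}\alpha_h\bidb_h/\bidb_i$, the two expressions agreeing exactly because $i \in \Te(D)$; clearing denominators shows the least valid first coordinate is $\gcd(\bida_i, \bidb_i)\,\ell_i$, using that $\gcd(\bida_i,\bidb_i)$ is an integer combination of $\bida_i$ and $\bidb_i$. For $(i,j) \in \Tp(D) \times \Tm(D)$, setting all defects outside $\{i,j\}$ to zero and solving the resulting $2 \times 2$ system by Cramer's rule yields a ray whose least valid first coordinate is $(\bida_i\bidb_j - \bida_j\bidb_i)\,\ell_{i,j}$; this is positive because $i \in \Tp(D)$ and $j \in \Tm(D)$ force $\bida_i/\bidb_i > S_a/S_b > \bida_j/\bidb_j$, and one checks that the two omitted inequalities $\epsilon_i,\epsilon_j \geq 0$ hold precisely under this sign condition. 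Finally, translating the Carath\'eodory decomposition back through $\epsilon_i = c_i + d\alpha_i$ and distributing $d$ among the pieces according to their direction coefficients writes an arbitrary $\sigma \in \Sigma$ as a $\bq_{\geq 0}$-combination of these rays, so Proposition~\ref{prop:cone-generation} bounds the degree of generation of $R_D$ by the sum of their first coordinates, which is exactly $\rho$ in~\eqref{eqn:def-sigma}.

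The main obstacle will be the spanning step: showing cleanly that every $\sigma \in \Sigma$ lies in the $\bq_{\geq 0}$-span of the $\Te(D)$ rays and the $\Tp(D) \times \Tm(D)$ rays. This is where the two-dimensional geometry does the real work, and it requires care with degenerate cases, namely vectors $(\bida_i,\bidb_i)$ with $\bida_i = 0$ or $\bidb_i = 0$, indices of $\Te(D)$ interacting with the opposite-side pairs, and the locus $d = 0$, together with the bookkeeping that redistributes the $d$-coordinate so that each piece genuinely lands on a single extremal ray. The accompanying determinant and $\gcd$ computations that produce the first coordinates are routine once the ray structure is pinned down.
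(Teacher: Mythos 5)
Your proposal is correct and follows essentially the same route as the paper's proof: reduce to the lattice cone $\Sigma$, identify its extremal rays as the $\Te(D)$ singletons and the opposite-side pairs in $\Tp(D)\times\Tm(D)$ with lattice representatives in degrees $\gcd(a_i,b_i)\,\ell_i$ and $(a_ib_j-a_jb_i)\,\ell_{i,j}$ respectively, and then invoke Proposition~\ref{prop:cone-generation}. The only difference is presentational: the paper exhibits the ray structure by slicing the simplicial cone $\Sigma_1$ (cut out by the combined constraint $\sum_i c_i(a_i+b_i)=0$) with the hyperplane $H=V(\sum_i a_i x_i)$, whereas you project to the plane via the defect coordinates $\epsilon_i = c_i + d\alpha_i$ and apply planar conic Carath\'eodory---two equivalent phrasings of the same two-dimensional geometry, carrying the same degenerate-case caveats that the paper also leaves implicit.
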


\begin{proof}
Suppose $g
\in (R_D)_d$ is a monomial.
 Then 
\[
	g = u^d \prod_{i = 1}^n {f_i}^{c_i}
\]

\noindent
for some $c_i \ge - \alpha_i d$ such that $\sum_{i=1}^n c_i \bida_i = 0$
and $\sum_{i=1}^n c_i \bidb_i = 0$. We can view $g$ as an element $(d, c_1, \ldots, c_n)$ of the
lattice 
\begin{align*}
	\Sigma = \left\{ (d', c_1', \ldots, c_n') \in \bz_{\geq 0}^{n+1} \colon c_i' \ge - d
\alpha_i \text{ for all }i \text{ and }\sum_{i=1}^n c_i' \bida_i = \sum_{i =
1}^n c_i' \bidb_i = 0 \right \}.	
\end{align*}

In order to determine a generating
set for $(R_D)_d,$ it suffices to find the extremal rays of $\Sigma.$
To do this, we extend the method of O'Dorney \cite[Theorem 8]{dorney:canonical}. 
We first consider the sub-cone $\Sigma_1 \subset \Sigma$
given by
\begin{align*}
	\Sigma_1 = \left\{ (d, c_1, \ldots, c_n) \in \bz_{\geq 0}^{n+1} \colon c_i \ge -
d \alpha_i \text{ for all }i \text{ and }\sum_{i=1}^n c_i (\bida_i+\bidb_i) = 0
\right\},
\end{align*}
which
has extremal rays given by
 
\[
	\epsilon_i := \left(1, -\alpha_1, \ldots, -\alpha_{i-1}, \frac{\sum_{j \ne i}
	\alpha_j (\bida_j + \bidb_j)}{\bida_i + \bidb_i}, -\alpha_{i + 1},
	\ldots, -\alpha_n \right).
\]
for $1 \leq i \leq n$.

Let $\Sigma_1 \otimes_\bz \bq$ be the $\bq_{\ge 0}$ span of $\epsilon_1, \ldots \epsilon_n$.
We can intersect $\Sigma_1 \otimes_\bz \bq$ with the hyperplane $H :=
V(\sum_{i=1}^n \bida_i x_i)$  to get the subspace $\Sigma \otimes_\bz \bq = H \cap (\Sigma_1
\otimes_\bz \bq)$.  Then, the extremal rays of $\Sigma$ are precisely the extremal 
rays of $\Sigma \otimes_\bz \bq$.

The extremal rays of $\Sigma \otimes_\bz \bq$ can be represented by points
lying only on the edges 
$\overline{e_i e_j}$.
The extremal rays are given by multiples of those $\epsilon_i$'s
which are contained in $H$ together with intersection points $e_{i, j}$ which
can be expressed as $H \cap \overline{e_i e_j},$ where $i \neq j$ and $e_i,
e_j \notin H$. In this case, $e_{i,j}$ is only defined when
$\# \{H \cap \overline{e_i, e_j}\} = 1$.

From this geometric description of the extremal rays, we can write
the extremal rays algebraically as follows.
For $i \in \Te(D)$, define $e_i \in \bk[\Sigma]$ in degree 
\[
	d_i = \ell_i \gcd(a_i, b_i)
\]
by 
\[
	e_i := d_i \epsilon_i.
\]

\noindent
For $i \in \Tp(D)$ and $j \in \Tm(D)$, with $i < j$, define $e_{i,j}\in \bk[\Sigma]$ in degree 
\[
	d_{i,j} = \ell_{i, j}(\bida_i \bidb_j - \bida_j \bidb_i)
\]
by
\begin{align*}
e_{i,j} := &\frac{a_j\sum_{k\ne i,j} d_{i,j}\alpha_k b_k - b_j\sum_{k \ne i,j} d_{i,j} \alpha_k a_k}{a_ib_j - b_i a_j} \epsilon_i \\
&+ \frac{a_i \sum_{k\ne i,j} d_{i,j} \alpha_k b_k - b_i\sum_{k\ne i,j} d_{i,j} \alpha_ka_k}{a_jb_i - b_j a_i} \epsilon_j.
\end{align*}

Since these $e_i$ are multiples of $\epsilon_i$ and these
$e_{i,j}$ are points of intersection of $H$ with
$\overline {e_ie_j}$ such that neither $e_i$ nor $e_j$
are contained in $H$, these form a set of extremal rays of $\Sigma$.

Thus, Proposition ~\ref{prop:cone-generation} implies that 
$R_D$ is generated in 
degrees less than the sum of the degrees of the $e_i$ and $e_{i,j}$, which is
\[
	\rho = \sum_{i\in \Te(D)} \gcd(\bida_i, \bidb_i)\ell_i + \sum_{\substack{
	i \in \Tp(D) \\	j \in \Tm(D)}} (\bida_i \bidb_j- \bida_j \bidb_i)\ell_{i,j}.
\qedhere
\]\end{proof}

Let $w_1, \ldots, w_r$ be the generators of $R_D$ in degrees less than $\rho$
(as given by Lemma ~\ref{lem:hirz-generators}), and let $\phi$ be the
surjection $\bk[w_1, \ldots w_r] \to R_D$. As in Section ~\ref{sec:proj}, we can factor $\phi$ through the
semigroup ring 
\[
	\bk[\Sigma] = \left \langle u^d z_1^{c_1} \cdots z_n^{c_n} \colon c_i \geq -d
	\alpha_i, \sum_{i=1}^{n} \bida_i c_i = \sum_{i=1}^{n} \bidb_i c_i
	\right \rangle
\]

\noindent
by
\[
\begin{tikzcd}[row sep = tiny]
	\bk[w_1,\ldots, w_r] \ar {r}{\chi} & \bk[\Sigma] \ar {r}{\psi} & R_D \\
	w_i \ar[mapsto]{r} & u^{d_i}z_1^{c_{i1}} \cdots z_n^{c_{in}} \ar[mapsto]{r} & u^{d_i}f_1^{c_{i1}} \cdots f_n^{c_{in}}.
\end{tikzcd}
\]

By Lemma ~\ref{lem:bound-ker-chi}, we can bound the degree of generation of $\ker \chi$ below
$2 \rho$.
Finally, we calculate the degree of generation of $\psi$:

\begin{lem}
\label{lem:hirz-bound-ker-psi}
Let $\rho$ be as in Equation ~\eqref{eqn:def-sigma}. Then, $\ker \psi$ is generated in degrees less than
\[
	\tau := \rho
	+ \max \left(\max_{i\in \Te}(\ell_i \gcd(a_i, b_i)), \max_{\substack{
	i \in	\Tp \\ j\in \Tm}} (\ell_{i,j} (\bida_i \bidb_j - \bida_j \bidb_i))
	\right).
\]
\end{lem}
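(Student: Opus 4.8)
The plan is to follow the proof of Lemma~\ref{lem:proj-relations-psi}, adapting the single-degree bookkeeping to the two bidegree constraints that cut out $\Sigma$. First I would produce an explicit generating set for $\ker\psi$. Invoking the Hirzebruch analogue of Proposition~\ref{prop:pm-span-and-basis} stated above—that every rational section on $\hirz_m$ has a unique normal form whose numerator is a monomial in $f_1,f_2,f_3,f_4$—I can choose for each $i$ a polynomial $\beta_i\in\bk[\Sigma]$ with $\psi(\beta_i)=f_i=\psi(z_i)$, rewriting $f_i$ in terms of $f_1,\dots,f_4$. I then claim $\ker\psi$ is generated by the binomials $u^d(z_i-\beta_i)\prod_{j=1}^n z_j^{c_j}$, where $d$ and the $c_j\ge-\alpha_j d$ range over all values for which the leading monomial $u^d z_i\prod_j z_j^{c_j}$ lies in $\bk[\Sigma]$, i.e. balances both bidegrees. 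That these suffice follows exactly as over $\bp^m$: repeatedly applying them rewrites the numerator of any element of $\bk[\Sigma]$ into its unique normal form supported on $z_1,\dots,z_4$, so every element of $\ker\psi$ reduces to $0$ through them.

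For the degree bound, fix $i$ and attach to the binomial $u^d(z_i-\beta_i)\prod_j z_j^{c_j}$ its leading lattice point $\sigma\in\Sigma$, obtained by raising the exponent of $z_i$ by one. By Proposition~\ref{prop:cone-generation}, $\sigma$ has a canonical expression $\lambda+\sum_k\zeta_k E_k$ in the extremal rays $\{e_i:i\in\Te\}\cup\{e_{i,j}:i\in\Tp,\,j\in\Tm\}$ of Lemma~\ref{lem:hirz-generators}, with $\lambda$ in the fundamental domain of degree below $\rho$. Whenever a complete extremal ray $E_k$ can be split off while keeping the $z_i$-exponent at least one, the binomial factors as $E_k$ times a binomial of the same shape in lower degree, so it is not a minimal generator. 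Reducing in this way, a minimal generator keeps only the fundamental part $\lambda$ together with the minimal ``seed'' forced to carry the extra factor $z_i$; the former contributes degree below $\rho$, and it remains to bound the degree of the seed.

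The seed bound is the step I expect to be the main obstacle. The claim is that the factor $z_i$ can always be absorbed into a single extremal ray, so that the seed has degree at most $\max\bigl(\max_{i\in\Te}\ell_i\gcd(a_i,b_i),\ \max_{i\in\Tp,\,j\in\Tm}\ell_{i,j}(a_ib_j-a_jb_i)\bigr)=\max_k\deg E_k$, giving the total bound $\rho+\max_k\deg E_k=\tau$. I would establish this using the geometry $\Sigma\otimes_\bz\bq=H\cap(\Sigma_1\otimes_\bz\bq)$ from Lemma~\ref{lem:hirz-generators}: the shift needed to incorporate one $z_i$ lands on an edge $\overline{e_pe_q}$ (or on a single ray $e_i$ when $i\in\Te$), and the same intersection-with-$H$ computation that produced the $e_{i,j}$ shows that its first coordinate is dominated by the relevant $d_i$ or $d_{i,j}$. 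The delicate bookkeeping is tracking the $z_i$-exponent inequality through the extremal-ray reduction and confirming that the irreducible residue never spreads across more than one extremal ray.
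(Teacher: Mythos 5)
Your first two steps track the paper's proof closely: the paper also generates $\ker\psi$ by the binomials $u^d(z_i-\beta_i)\prod_j z_j^{c_j}$ (using that numerators can be normalized to polynomials in $f_1,\dots,f_4$), and it also reduces a relation by splitting off a nonnegative integer combination $\lambda$ of extremal rays while preserving the property that the quotient still supports the factor $z_i-\beta_i$. The gap is exactly where you predicted it, and your proposed fix is not the paper's and does not work: the claim that ``the irreducible residue never spreads across more than one extremal ray'' is false. The relevant quantity is the excess of a lattice point $\sigma$ in its $i$-th coordinate, i.e.\ the amount by which $\sigma_i$ exceeds $-\alpha_i\deg\sigma$; this is a linear functional that vanishes on every extremal ray except those indexed by $i$ (the ray $e_i$ if $i\in\Te$, or the rays $e_{i,j}$ with $j\in\Tm$ if $i\in\Tp$, symmetrically for $i\in\Tm$), and the per-ray excesses $\kappa_i$, $\kappa'_{i,j}$ are positive rationals that need not be integers (since $\ell_{i,j}$ omits $k_i$, the quantity $\alpha_i d_{i,j}$ is not an integer in general) and need not be $\geq 1$. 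Suppose $i\in\Tp$ and $\sigma=e_{i,j_1}+e_{i,j_2}+\lambda$ with $\kappa'_{i,j_1}=\kappa'_{i,j_2}=3/5$ and $\lambda$ carrying no excess: then $\sigma$ has excess $6/5\geq 1$, so it supports a relation, but removing either whole ray drops the excess to $3/5<1$, and the $z_i$ cannot be absorbed into a single ray either, since writing $\sigma=(\text{cone element})+t\,e_{i,j_1}$ with $t\kappa'_{i,j_1}\geq 1$ forces $t=5/3$, and $\sigma-\frac{5}{3}e_{i,j_1}$ has a negative coefficient on an extremal ray, hence lies outside the cone. So the residue genuinely spreads over two rays, your seed bound $\max_k\deg E_k$ is not obtainable by your argument, and the ``intersection-with-$H$'' degree computation you invoke does not address this, because the obstruction is a convexity/distribution issue, not a degree computation.

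The paper's mechanism at this step is different and is built for exactly this situation: it never tries to concentrate the excess. For $i\in\Tp$ it chooses coefficients $r_{i,j}\geq 0$ over \emph{all} $j\in\Tm$ with $\sum_{j\in\Tm}r_{i,j}\kappa'_{i,j}=1$ (possible because the relation forces $\sum_{j\in\Tm}s_{i,j}\kappa'_{i,j}\geq 1$), and then takes the unreduced part to be $E=\sum_{j\in\Te}(s_j-\lfloor s_j-r_j\rfloor)e_j+\sum_{j,k}(s_{j,k}-\lfloor s_{j,k}-r_{j,k}\rfloor)e_{j,k}$, i.e.\ ``fractional part plus a distributed correction,'' so that $\lambda=\sigma-E$ is a nonnegative integer combination of rays and $E$ still carries a full unit of excess; one then checks each shifted point $E_\mu=\sigma_\mu-\lambda$, for $\mu$ a monomial of $\beta_i$, also lies in $\Sigma$, so the whole binomial is divisible by the monomial $\lambda$. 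If you rewrite your third step using this distributed correction, your argument aligns with the paper's. One caveat worth recording: even the paper's numerical conclusion is delicate at this point, since its assertion that the $r$-part of $\deg E$ is at most $\max_{j\in\Tm}\bigl(\ell_{i,j}(\bida_i\bidb_j-\bida_j\bidb_i)\bigr)$ implicitly uses that the per-ray excesses are at least $1$; when the bidegrees of $D$ are small these excesses can be smaller, which is the same phenomenon that produces the $\max_{0\le i\le n}(\bida_i)/\deg(D)$ term in Lemma~\ref{lem:proj-relations-psi} for $\bp^m$, so a fully careful write-up must track these excesses explicitly.
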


\begin{proof}
We first claim that there exist $u^{\deg z_1}\beta_1, \ldots, u^{\deg z_n} \beta_n\in \bk[uz_1, uz_2, uz_3, uz_4]$
such that $\ker \psi$ has relations of the form
\[
	u^d(z_i - \beta_i)\prod_{j=1}^n {z_j}^{c_{j}}
\]

\noindent
lying in some degree $d \in \mathbb{N}$
with $c_j \ge -\alpha_j d$ (for all $j$) satisfying $\bida_i + \sum_{j = 1}
^n \bida_j c_j = 0$ and $\bidb_i + \sum_{j=1}^n \bidb_j c_j = 0$.
Specifically, $\beta_i$,
is the polynomial $\beta_i(z_1,z_2,z_3,z_4)$ so that
$\beta_i(z_1,z_2,z_3,z_4) - z_i \in \ker \psi$. Such an element
$\beta$ exists and is unique because rational functions
whose numerators are polynomials in $f_1, f_2, f_3, f_4$ form a basis
of all rational functions in $R_D$. Furthermore, these
generate all relations, since they allow us to reduce any monomial $u^d\prod_{j =
1}^n z_j^{r_j}$ to the canonical form where $r_j = -\lfloor d \alpha_i
\rfloor$ whenever $j > 4$. To bound the degree of generation of these relations,
we bound the degree of generation of the ideal $(\beta_i - z_i) \cap
\ker(\psi)$ for each $i$.

For the remainder of this proof: we fix $i \in \{1, \ldots,
n\}$ and fix a relation of the form $u^d (z_i - \beta_i) \prod_{j =
1}^n z_j^{c_j}$ as we seek to bound the degree of generation of the
ideal $(\beta_i - z_i) \cap \ker(\psi)$. There are no
relations for $i\in \{1, 2, 3, 4\}$ as then $\beta_i - z_i = 0 \in
\bk[\Sigma]$. Thus we restrict attention to $i \ge 5$. Our goal is
to show that if $u^d(z_i - \beta_i)\prod_{j=1}^n z_j^{c_j}$ has
sufficiently high degree, then there is another relation dividing
it. We do so by considering the lattice points corresponding to the
monomials appearing in the relation $u^d(z_i - \beta_i)\prod_{j=1}^
n z_j^{c_j}$, and finding a fixed $\lambda\in \Sigma$ that we can
simultaneously factor out of all monomials.

Consider a relation $u^d(z_i - \beta_i)\prod_{j=1}^n z_j^{c_j}$ and
let it correspond to the lattice point $\sigma := (d, c_1, \ldots,
c_{i-1}, c_{i}+1, c_{i+1}, \ldots, c_n) \in \Sigma$.
Then we can write $\sigma$ as a sum of $s_j e_j$'s for $j\in
\Te$ and $s_{j,k}e_{j,k}$ for $j\in \Tp, k\in \Tm$.  
For convenience, define $d_j := \deg e_j$ (when it exists) and let the $j^{th}$
component of $e_j$ be $-\alpha_j d_j + \kappa_j$ for some $\kappa_j \in \bq$. Also, let $d_{j,k} := \deg e_{j,k}$ (when it exists) and let the $j^{th}$ component of $e_{j,k}$ be $-\alpha_j d_{j,k} + \kappa_{j,k}'$ and the $k^{th}$
component be $-\alpha_k d_{j,k} + \kappa_{j,k}''$ for $\kappa_{j,k}', \kappa_{j,k}'' \in \bq$.

Since $u^d (z_i - \beta_i)\prod_{j=1}^n z_j^{c_j}$ is a relation, each monomial of it must be an element of $\Sigma$. This implies that $s_i\kappa_i\ge 1$, $\sum_{j\in \Tm} s_{i,j}\kappa_{i,j}' \ge 1$, and $\sum_{j\in \Tp} s_{i,j}\kappa_{j,i}'' \ge 1$ if $i\in \Te$, $i\in \Tp$, and $i\in \Tm$ respectively.

If $i\in \Te$ define $r_i := \frac{1}{\kappa_i}$.  If $i\in \Tp$ choose $r_{i,j}\in \bq_{\ge 0}$ for all $j\in \Tm$ such that $\sum_{j\in \Tm} r_{i,j}\kappa_{i,j}' = 1$; similarly, if $i\in \Tm$ choose $r_{j,i}\in \bq_{\ge 0}$ for all $j\in \Tp$ such that $\sum_{j\in \Tm} r_{j,i}\kappa_{j,i}'' = 1$.  For $j\ne i$, define $r_j := 0.$ For all pairs $(j,k)$ so that $j \neq i$ and $k \neq i$ define $r_{j,k} := 0$.
Define $E$ by
\begin{equation}\label{eqn:hirz-E-translation}
	E := \sum_{j\in \Te} (s_j - \lfloor s_j - r_j \rfloor) e_j + \sum_{\substack{j \in 
	\Tp \\ k \in \Tm}} (s_{j,k} - \lfloor s_{j,k} - r_{j,k} \rfloor) e_{j,k}.
\end{equation}

Then,
\begin{align}
\label{eqn:e-bound}
	\deg(E) \le \rho + \begin{cases}
	\ell_i \gcd(\bida_i, \bidb_i)	&\mbox{ if } i \in \Te \\
	\max_{j \in \Tm} \bigl(\ell_{i, j} (\bida_i \bidb_j - \bida_j \bidb_i)\bigr)
	&\mbox{ if } i \in \Tp \\
	\max_{j \in \Tp} \bigl(\ell_{j, i} (\bida_j \bidb_ i - \bida_i \bidb_j) \bigr)
	&\mbox{ if } i \in \Tm. \end{cases}
\end{align}

\noindent
where $\rho$ is as in Equation ~\eqref{eqn:def-sigma}. To obtain the
bound given in Equation ~\eqref{eqn:e-bound}, the $\rho$ term 
corresponds to the sums of fractional parts of $s_j - r_j$'s and $s_
{j, k} - r_{j, k}$'s whereas the second term corresponds to the sums 
of $r_j$'s and $r_{j, k}$'s (noting that in Equation
~\ref{eqn:hirz-E-translation}, $s_j - \lfloor s_j - r_j \rfloor =
r_j + \fr(s_j - r_j)$).

Define
\[
	\lambda := \sigma - E = \sum_{j \in \Te} (\lfloor s_j - r_j \rfloor)
	e_j + \sum_{\substack{j \in \Tp \\ k \in \Tm}} ( \lfloor s_{j, k} -
	r_{j, k} \rfloor) e_{j, k} \in \Sigma.
\]

\noindent
Let $M_i$ be the set of monomials terms of $\beta_i = \beta_i(z_1, \ldots z_4)$.  Let $\mu = \prod_{j=1}^4
{z_j}^{h_j}\in M_i$ and consider the lattice point 
\[
	\sigma_\mu = (d, c_1 + h_1, \ldots, c_4+h_4, c_5, \ldots, c_n).
\]
where $d = \deg \sigma$.
Define 
\[
	E_\mu = \sigma_\mu - \lambda.
\]
From the definitions, $E_\mu$ lies in $\Sigma$ 
and has the same degree as $E$.

By construction, $E - \sum_{\mu \in M_i} E_\mu \in \ker \psi$ and
divides $u^d(z_i - \beta_i) \prod_{j = 1}^n z_j^{c_j}$. 
Furthermore, we have already bounded $\deg(E)$ in Equation ~\eqref{eqn:e-bound}.
Finally, recall $\ker \psi$ is generated by relations of the form $u^d(z_i - \beta_i) \prod_{j = 1}^n z_j^{c_j}$ as
$i$ ranges between $1$ and $n$. Thus, taking the maximum over all $i$ 
of our bound in Equation ~\eqref{eqn:e-bound}, we see $\ker \psi$ is generated in degrees at most
\[
	\tau = \rho
	+ \max \left(\max_{i\in \Te} \bigl(\ell_i \gcd(a_i, b_i) \bigr),
	\; \max_{\substack{i \in \Tp \\ j \in \Tm}} \bigl(\ell_{i, j}
	(\bida_i \bidb_j - \bida_j
	\bidb_i) \bigr) \right)\le 2\rho.
	\qedhere
\]
\end{proof}

By combining the above results, we get our main theorem bounding
the generator and relation degrees of the section ring of
$\bq$-divisors on Hirzebruch surfaces.

\begin{thm}
\label{thm:hirz-generators-relations-full}
Let $D = \sum_{i = 1}^n \alpha_i D_i \in \di \hirz_m \otimes_\bz \bq$
where $\alpha_i = \frac{c_i}{k_i} \in \bq$ is written in reduced form.
Write $\ell_{i,j} := \lcm_{h\ne i,j}(k_h)$.
Let $u, v, z, w$ be the coordinates for the Hirzebruch surface
$\hirz_m$, as described at the beginning of Section
~\ref{sec:hirz} and suppose that $\{f_1, \ldots, f_n\}$
contains two independent 
linear polynomials in $u, v$ and two independent linear polynomials in
$w, x$ {\rm(}with corresponding $\alpha_j$ possibly zero{\rm)}.
Recall $\Te, \Tp$, and $\Tm$ as given in Definition ~\ref{defn:t-defn} and let each $D_i = V(f_i)$ where $f_i \in \sco(a_i, b_i)$.

Then $R_D$ is generated in degrees at most
\[
	\rho = \sum_{i\in \Te(D)} \gcd(\bida_i, \bidb_i)\ell_i +
	\sum_{\substack{i \in \Tp(D) \\	j \in \Tm(D)}} (\bida_i \bidb_j
	- \bida_j \bidb_i) \ell_{i, j}
\]

\noindent
with relations generated in degrees at most $2 \rho$.
\end{thm}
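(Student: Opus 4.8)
The plan is to assemble Theorem~\ref{thm:hirz-generators-relations-full} from the three component results already established, exactly paralleling the strategy used for projective space in Theorem~\ref{thm:proj-generators-relations}. The generator bound is immediate: Lemma~\ref{lem:hirz-generators} states precisely that $R_D$ is generated in degrees at most $\rho$, so nothing further is needed for that half of the claim. The entire remaining task is to bound the degree of relations, i.e.\ the degree of generation of $\ker \phi$, where $\phi \colon \bk[w_1, \ldots, w_r] \to R_D$ is the surjection from the generators supplied by Lemma~\ref{lem:hirz-generators}.

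The key structural device is the factorization $\phi = \psi \circ \chi$ through the semigroup ring $\bk[\Sigma]$, as set up in the paragraph preceding Lemma~\ref{lem:hirz-bound-ker-psi}. Since $\chi$ is surjective by construction, Lemma~\ref{lem:composite-map} applies and tells us that $\ker \phi$ is generated in degrees at most the maximum of the degrees of generation of $\ker \chi$ and of $\ker \psi$. So the proof reduces to bounding each of these two kernels. For $\ker \chi$, I would invoke Lemma~\ref{lem:bound-ker-chi}: the extremal rays of $\Sigma$ are the $e_i$ (for $i \in \Te(D)$) and $e_{i,j}$ (for $i \in \Tp(D)$, $j \in \Tm(D)$) identified in the proof of Lemma~\ref{lem:hirz-generators}, lying in degrees $d_i = \ell_i \gcd(a_i,b_i)$ and $d_{i,j} = \ell_{i,j}(\bida_i \bidb_j - \bida_j \bidb_i)$ respectively, whose degrees sum to $\rho$; hence Lemma~\ref{lem:bound-ker-chi} gives that $\ker \chi$ is generated in degrees at most $2(\rho - 1) < 2\rho$. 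For $\ker \psi$, I would simply cite Lemma~\ref{lem:hirz-bound-ker-psi}, which shows $\ker \psi$ is generated in degrees at most $\tau \le 2\rho$, the final inequality being exactly the one recorded at the end of that lemma's proof.

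Combining these, Lemma~\ref{lem:composite-map} yields that $\ker \phi$ is generated in degrees at most $\max(2(\rho-1), \tau) \le 2\rho$, which is the asserted relation bound. The proof is therefore a short bookkeeping argument once the three lemmas are in hand. I expect no genuine obstacle at this stage, precisely because all the analytic difficulty has been front-loaded into the lemmas; the one point requiring a moment's care is confirming that the two bounds on $\ker \chi$ and $\ker \psi$ are both genuinely dominated by $2\rho$, so that the clean statement ``relations generated in degrees at most $2\rho$'' is warranted. The bound on $\ker \chi$ gives $2\rho - 2$ and the bound on $\ker \psi$ gives $\tau \le 2\rho$; taking the maximum and using $\tau \le 2\rho$ from Lemma~\ref{lem:hirz-bound-ker-psi} closes the argument.

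If anything in the writeup deserves emphasis over the projective case, it is that here the extremal rays come in two flavors (the $e_i$ from $\Te(D)$ and the $e_{i,j}$ from pairs in $\Tp(D) \times \Tm(D)$), so when applying Lemma~\ref{lem:bound-ker-chi} one must note that the sum $\sum_i d_i$ over all extremal rays is exactly $\rho$ as defined in Equation~\eqref{eqn:def-sigma}. This identification of $\sum (\deg \text{extremal ray}) = \rho$ is the sole place where the specific combinatorial geometry of the Hirzebruch cone enters the final assembly, and it is already implicit in the proof of Lemma~\ref{lem:hirz-generators}. With that observation the theorem follows formally.
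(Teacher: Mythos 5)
Your proposal is correct and follows essentially the same route as the paper: the generator bound is quoted from Lemma~\ref{lem:hirz-generators}, and the relation bound comes from factoring $\phi = \psi \circ \chi$ through $\bk[\Sigma]$, applying Lemma~\ref{lem:composite-map}, and bounding $\ker\chi$ by Lemma~\ref{lem:bound-ker-chi} and $\ker\psi$ by Lemma~\ref{lem:hirz-bound-ker-psi}. Your extra bookkeeping --- checking that the extremal-ray degrees sum to $\rho$ so that Lemma~\ref{lem:bound-ker-chi} gives $2(\rho-1) < 2\rho$, and that $\tau \le 2\rho$ --- is exactly the implicit content of the paper's shorter write-up.
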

\begin{proof}
The generation degree bound is as stated in Lemma \ref{lem:hirz-generators}.
By Proposition \ref{lem:composite-map}, the degree of generation of
$\ker \phi$ is at most the maximum of the generation degrees of $\ker \chi$
and $\ker \psi$, giving us the desired relations bound. The bound on $\ker \chi$
follows from Lemma ~\ref{lem:bound-ker-chi} and the bound on $\ker \psi$
follows from Lemma ~\ref{lem:hirz-bound-ker-psi}.
\end{proof}


\section{Further Questions}
\label{sec:conc}
\invisiblesubsection
Recall that every minimal rational surface is either isomorphic to
$\bp^2$ or $\hirz_m$ for some $m \geq 0, m \neq 1$ \cite{eisenbud-harris:minimal}. By Theorems
~\ref{thm:proj-generators-relations} and ~\ref{thm:hirz-generators-relations}, we have given bounds for the generators and relations
of arbitrary section rings on any minimal rational surface. A
natural extension of our results is the following.
\begin{question}
\label{qn:general-minimal-surface}
Can we describe generators and relations of $R_D$ for a divisor $D$ on an
arbitrary rational surface $X$?
\end{question}

Every rational surface can be obtained from a minimal rational surface by a 
sequence of blow-ups \cite{eisenbud-harris:minimal}. Therefore, to answer
Question \ref{qn:general-minimal-surface} affirmatively, it suffices
to bound the degree of generators and relations of the 
section ring of a divisor on a blow-up of a given surface
in terms of the section 
rings of some associated divisors on that given surface.

Another direction to generalize the work in this paper
would be to try to express section rings of $\bq$-divisors on
$X \times Y$ in terms of those on $X$ and $Y$. In this paper,
we bounded the degrees of presentations on section rings on $\bp^1 \times \bp^1 \cong \hirz_0$.
Perhaps similar techniques can be used to bound
degrees of presentations on section rings on $(\bp^1)^k$ or
more generally on $(\bp^1)^{i_1} \times \cdots (\bp^k)^{i_k}.$
One might further try to generalize this to bounding degrees of presentations on bundles over $\bp^m$ or on more general
products of schemes.


\section{Acknowledgments}
\label{sec:ack}
We are grateful to David Zureick-Brown for offering great advice on
this project, for many useful suggestions on this paper, and for
many valuable conversations. We thank Peter
Landesman and Shou-Wu Zhang for offering helpful comments and
suggestions for this paper. We also thank Ken Ono and the
Emory University Number Theory REU for arranging our project and
for the great support that we have received. Finally, we would like to acknowledge the
support of the National Science Foundation (grant number DMS-1250467).


\nocite{*}
\bibliography{bibliography-stacky-surface}
\bibliographystyle{alpha}

\end{document}